\DeclareMathOperator{\Sym}{\ensuremath{Sym}}
\DeclareMathOperator{\Hom}{\ensuremath{Hom}}
\DeclareMathOperator{\rank}{\ensuremath{rank}}
\newenvironment{customcon}[1]
  {\innercustomcon}
  {\endinnercustomcon}
\theoremstyle{plain}
\newtheorem{theorem}{Theorem}[section]
\theoremstyle{plain}
\newtheorem{lemma}[theorem]{Lemma}
\theoremstyle{plain}
\newtheorem{proposition}[theorem]{Proposition}
\theoremstyle{plain}
\newtheorem*{proposition*}{Proposition}
\theoremstyle{plain}
\theoremstyle{plain}
\theoremstyle{definition}
\newtheorem{remark}[theorem]{Remark}
\numberwithin{equation}{section}
\theoremstyle{plain}
\begin{document}
\title{Kov\'acs' conjecture on characterization of projective space and hyperquadrics}

\author{Soham Ghosh}
\address{Department of Mathematics, University of Washington, Seattle, WA 98195, USA}
\email{\tt soham13@uw.edu }

\keywords{Positivity of tangent bundle, projective space, hyperquadrics, ample vector bundles}

\subjclass[2020]{14M20, 14J45}
\date{\today}

\begin{abstract}
We prove Kovács' conjecture that claims that if the $p^{th}$ exterior power of the tangent bundle of a smooth complex projective variety contains the $p^{th}$ exterior power of an ample vector bundle then the variety is either projective space or the $p$-dimensional quadric hypersurface. We also prove a similar characterization involving symmetric powers instead of exterior powers. This provides a common generalization of Mori, Wahl, Cho--Sato, Andreatta--Wiśniewski, Kobayashi--Ochiai, and Araujo--Druel--Kovács type characterizations of such varieties.
\end{abstract}
\maketitle

\section{Introduction}
The problem of classifying projective spaces and hyperquadrics has been extensively studied in literature. The goal of this short note is to give a new characterization of them, and thereby unify several of the existing characterizations based on positivity of the tangent bundle and its exterior powers. The first result in this direction was Mori's resolution of the Hartshorne conjecture in \cite{Mor79}, which classified projective spaces as the only smooth complex projective varieties admitting ample tangent bundle. Around the same time, Siu and Yau \cite{SY80} also proved Frankel's conjecture \cite{Fra61} (which is an analogoue of Hartshorne's conjecture in complex differential geometry) using harmonic maps. The main result of our paper is the following. 

\begin{theorem}\label{Theorem:Main}
Let $X$ be a smooth complex projective variety of dimension $n$. If there exists an ample vector bundle $\mathcal{E}$ of rank $r$ on $X$ and a positive integer $p\leq n$ such that $\bigwedge^p\mathcal{E}\subseteq\bigwedge^pT_X$, then either $X\cong \mathbb P^n$, or $p=n$ and $(X, \mathcal{E})\cong (Q_p, \mathcal{O}_{Q_p}(1)^{\oplus r})$ where $Q_p\subseteq \mathbb{P}^{p+1}$ is a smooth quadric hypersurface in $\mathbb{P}^{p+1}$. Furthermore, if $X\cong \mathbb{P}^n$, then $\det\mathcal{E}\cong\mathcal{O}_{\mathbb{P}^n}(l)$ with $l=r$ or $r+1$ and:
\begin{enumerate}[leftmargin=*]
\item If $r<n$, then $\mathcal{E}\cong \mathcal{O}_{\mathbb{P}^n}(1)^{\oplus r}$.
\item If $r=n$ and $l=n$, then $\mathcal{E}\cong \mathcal{O}_{\mathbb{P}^n}(1)^{\oplus n}$.
\item If $r=n$ and $l=n+1$:
\begin{enumerate}
\item If $1\leq p\leq n-1$, then $\mathcal{E}\cong T_{\mathbb{P}^n}$.
\item If $p=n$, then either
$\mathcal{E}\cong T_{\mathbb{P}^n}$ or $\mathcal{E}\cong\mathcal{O}_{\mathbb{P}^n}(2)\oplus \mathcal{O}_{\mathbb{P}^n}(1)^{\oplus(n-1)}$.
\end{enumerate}
\end{enumerate}
\end{theorem}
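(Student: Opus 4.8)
The strategy is to pass to a minimal rational curve through a general point, restrict the inclusion $\bigwedge^p\mathcal E\subseteq\bigwedge^pT_X$ to it, read off numerical constraints, and feed these into the Cho--Miyaoka--Shepherd-Barron and Kebekus characterisations of $\mathbb P^n$ and $Q^n$, matching the configurations not reached this way against the characterisations of Andreatta--Wi\'sniewski, Cho--Sato and Araujo--Druel--Kov\'acs. (One may assume $r\ge p$, else $\bigwedge^p\mathcal E=0$ and there is nothing to prove.) First I would show $X$ is uniruled: otherwise Miyaoka's generic semipositivity theorem gives $\mu_{\max}(T_X|_C)\le 0$ for a general complete-intersection curve $C$ of sufficiently positive divisors, and since in characteristic zero $\bigwedge^pT_X$ is a direct summand of $T_X^{\otimes p}$ with $\mu_{\max}((T_X|_C)^{\otimes p})=p\,\mu_{\max}(T_X|_C)$, this forces $\mu_{\max}(\bigwedge^pT_X|_C)\le 0$; but the inclusion stays injective after restriction to such $C$, on which $\bigwedge^p(\mathcal E|_C)$ is ample, so $\mu_{\max}(\bigwedge^pT_X|_C)>0$ --- a contradiction. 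Fix then a minimal rational curve $\ell\cong\mathbb P^1$ through a general point, so $T_X|_\ell\cong\mathcal O(2)\oplus\mathcal O(1)^{\oplus c}\oplus\mathcal O^{\oplus(n-1-c)}$ with $c=-K_X\cdot\ell-2\ge 0$, and write $\mathcal E|_\ell\cong\bigoplus_{i=1}^r\mathcal O(b_i)$ with $b_1\ge\cdots\ge b_r\ge 1$.

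\smallskip
Next, restricting the inclusion to $\ell$, I would use two facts about injections of bundles on $\mathbb P^1$: a line sub-bundle of positive degree cannot map nontrivially to a summand of non-positive degree, and the largest degree of a rank-$k$ sub-bundle of $\bigoplus\mathcal O(d_j)$ is the sum of the $k$ largest $d_j$. The first shows that all $\binom rp$ summands of $\bigwedge^p\mathcal E|_\ell$ (each of degree $\ge p\ge1$) inject into the positive part of $\bigwedge^pT_X|_\ell$, so $\binom rp\le\binom np$ and $r\le n$. The second gives $p\le b_1+\cdots+b_p\le 2+\min(p-1,c)$, hence $-K_X\cdot\ell=c+2\ge p$. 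A refinement shows that if some $b_i\ge 2$ then the $\binom{r-1}{p-1}$ summands of $\bigwedge^p\mathcal E|_\ell$ of degree $\ge p+1$ inject into the $\binom{c}{p-1}$ summands of $\bigwedge^pT_X|_\ell$ of degree $p+1$, so $c\ge r-1$; in particular $\mathcal E|_\ell\not\cong\mathcal O(1)^{\oplus r}$ forces $-K_X\cdot\ell\ge r+1$.

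\smallskip
If $-K_X\cdot\ell\ge n$, Cho--Miyaoka--Shepherd-Barron / Kebekus give $X\cong\mathbb P^n$ (if $\ge n+1$) or $X\cong\mathbb P^n$ or $Q^n$ (if $=n$); by the previous step this already settles $p=n$ and $r=n$ (in the latter, either $\mathcal E|_\ell\not\cong\mathcal O(1)^{\oplus n}$ so $-K_X\cdot\ell\ge n+1$, or $\mathcal E|_\ell\cong\mathcal O(1)^{\oplus n}$ and a count of the degree-$\ge p$ summands of $\bigwedge^pT_X|_\ell$ forces $c=n-1$), recovering in particular Mori's and Kobayashi--Ochiai's theorems. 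If $X\cong Q^n$ then $-K\cdot\ell=n$, and combining the vanishing $H^0(Q^n,\bigwedge^pT_{Q^n}(-p))=H^0(Q^n,\Omega^{n-p}_{Q^n}(n-p))=0$ for $0<p<n$ with the ampleness of $\bigwedge^p\mathcal E$ forces $p=n$; since $\bigwedge^nT_{Q^n}=\mathcal O_{Q^n}(n)$ has rank $1$ this gives $r=n$ and $\det\mathcal E\hookrightarrow\mathcal O(n)$, whence $\mathcal E|_\ell\cong\mathcal O(1)^{\oplus n}$ on every line and $\mathcal E\cong\mathcal O_{Q^n}(1)^{\oplus n}$, i.e.\ $(X,\mathcal E)\cong(Q_p,\mathcal O_{Q_p}(1)^{\oplus r})$. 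The remaining, genuinely delicate range is $p<n$ and $r<n$, where the numerics above do not reach the threshold $-K_X\cdot\ell\ge n$: here the plan is to exploit the ample subsheaf $\bigwedge^p\mathcal E\subseteq\bigwedge^pT_X$ --- through the geometry of the variety of minimal rational tangents $\mathcal C_x\subseteq\mathbb P(T_xX)$ at a general point together with the linear subspace $\mathbb P(\bigwedge^p\mathcal E_x)\subseteq\mathbb P(\bigwedge^pT_xX)$ it cuts out --- to produce an ample subsheaf of $T_X$ itself, and then invoke the sheaf form of the Andreatta--Wi\'sniewski theorem (with Cho--Sato and Araujo--Druel--Kov\'acs) to get $X\cong\mathbb P^n$.

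\smallskip
Finally, once $X\cong\mathbb P^n$: restricting to a line, $b_i\ge1$ and $b_1+\cdots+b_r=\deg\mathcal E|_\ell\le r+1$ show $\mathcal E|_\ell\cong\mathcal O(1)^{\oplus r}$ or $\mathcal O(2)\oplus\mathcal O(1)^{\oplus(r-1)}$ for every line, so $\mathcal E(-1)$ is a uniform bundle of rank $r\le n$ of splitting type $(0,\dots,0)$ or $(1,0,\dots,0)$; the classification of uniform bundles of rank $\le n$ on $\mathbb P^n$ then leaves only $\mathcal E\cong\mathcal O(1)^{\oplus r}$, $\mathcal E\cong\mathcal O(2)\oplus\mathcal O(1)^{\oplus(r-1)}$, or (only if $r=n$) $\mathcal E\cong T_{\mathbb P^n}$, and the Bott computation $H^0(\mathbb P^n,\bigwedge^pT_{\mathbb P^n}(-p-1))=H^0(\mathbb P^n,\Omega^{n-p}_{\mathbb P^n}(n-p))=0$ for $1\le p<n$ --- which says $\bigwedge^pT_{\mathbb P^n}$ contains no $\mathcal O(p+1)$ --- rules out $\mathcal O(2)\oplus\mathcal O(1)^{\oplus(r-1)}$ as soon as $p<n$; sorting by $r$ and by $l=\deg\mathcal E|_\ell\in\{r,r+1\}$ produces exactly the list claimed. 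The hard part throughout is the range $p<n$, $r<n$ of the third step: the minimal-rational-curve numerics simply do not force $-K_X\cdot\ell$ to be as large as $n$, so the Cho--Miyaoka--Shepherd-Barron machinery is not directly available, and one must instead extract honest positivity of $T_X$ out of positivity living only inside an exterior power of it; controlling $\mathcal C_x$ precisely enough to carry this out is the crux of the proof.
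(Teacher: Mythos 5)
Your proposal has a genuine gap at exactly the point you yourself flag as the crux. The minimal-rational-curve numerics you derive ($-K_X\cdot\ell\geq p$, and $\geq r+1$ unless $\mathcal E|_\ell\cong\mathcal O(1)^{\oplus r}$) only reach the Cho--Miyaoka--Shepherd-Barron/Kebekus threshold $-K_X\cdot\ell\geq n$ when $p=n$ or $r=n$; in the remaining range $p<n$, $r<n$ (and arbitrary Picard number) you offer only a ``plan'' --- use the VMRT $\mathcal C_x$ together with $\mathbb P(\bigwedge^p\mathcal E_x)\subseteq\mathbb P(\bigwedge^pT_xX)$ to manufacture an ample subsheaf of $T_X$ and then quote Andreatta--Wi\'sniewski. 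No construction is given, and none is known to follow from standard VMRT technology: extracting positivity of $T_X$ itself from positivity sitting only inside $\bigwedge^pT_X$ is precisely the open content of Kov\'acs' conjecture beyond the Picard-number-one and low-$p$ cases, so this step cannot be waved through. A secondary gap: in your quadric branch you invoke the vanishing $H^0(Q^n,\Omega^{n-p}_{Q^n}(n-p))=0$ ``combined with ampleness of $\bigwedge^p\mathcal E$'' to force $p=n$, but to produce the required nonzero global map $\mathcal O_{Q^n}(p)\to\bigwedge^pT_{Q^n}$ you need to know that $\mathcal E$ (or at least a suitable line subbundle of $\bigwedge^p\mathcal E$) is a twist of $\mathcal O_{Q^n}(1)$, which at that stage of your argument you have not established; restriction to a single line only gives information on that line.

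The paper closes both gaps by a completely different reduction, avoiding rational-curve theory altogether: from the inclusion $\iota:\bigwedge^p\mathcal E\hookrightarrow\bigwedge^pT_X$ one takes $\wedge^f\iota$ on $\det(\bigwedge^p\mathcal E)\cong\det(\mathcal E)^{\otimes a}$ (with $f=\binom rp$, $a=\binom{r-1}{p-1}$) and composes with the injective antisymmetrization maps $\bigwedge^f(\bigwedge^pT_X)\hookrightarrow(\bigwedge^pT_X)^{\otimes f}\hookrightarrow T_X^{\otimes pf}$, producing a nonzero section of $T_X^{\otimes pf}\otimes\det(\mathcal E)^{-a}$; since $pf=ra=\rank(\mathcal E^{\oplus a})$, the Druel--Paris theorem applied to $\mathcal F=\mathcal E^{\oplus a}$ yields at once either $(X,\det\mathcal E)\cong(\mathbb P^n,\mathcal O(l))$ or $(X,\mathcal E)\cong(Q_n,\mathcal O_{Q_n}(1)^{\oplus r})$, for all $p,r$ and any Picard number --- this is the ingredient your outline is missing. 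In the quadric case the splitting $\mathcal E\cong\mathcal O_{Q_n}(1)^{\oplus r}$ is then already known, so the map $\mathcal O_{Q_n}(p)\to\bigwedge^pT_{Q_n}$ exists globally and Snow's vanishing forces $p=n$. Your final paragraph (splitting type on lines, uniformity, Elencwajg--Hirschowitz--Schneider, and Bott vanishing to exclude $\mathcal O(2)\oplus\mathcal O(1)^{\oplus(r-1)}$ for $p<n$) does essentially coincide with the paper's treatment of the case $X\cong\mathbb P^n$ and is fine, but as it stands the heart of the theorem is not proved.
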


Theorem~\ref{Theorem:Main} affirmatively answers the following conjecture of Kovács \cite{Ross11}[Conjecture 1.2].

\begin{customcon}{I}[Kovács]\label{KovacsCon}
Let $X$ be a smooth complex projective variety of dimension $n$. If there exists an ample vector bundle $\mathcal{E}$ of rank $r$ on $X$ and a positive integer $p\leq n$ such that $\bigwedge^p\mathcal{E}\subseteq\bigwedge^pT_X$, then either $X\cong\mathbb{P}^n$ or $p=n$ and $X\cong Q_p\subseteq\mathbb{P}^{p+1}$, where $Q_p$ is a smooth quadric hypersurface in $\mathbb{P}^{p+1}$. 
\end{customcon}

Conjecture~\ref{KovacsCon} was proven in \cite{Ross11} for varieties with Picard number $1$ (\cite{Ross11}[Theorem~1.1]) and for $n=2$ (\cite{Ross11}[Corollary~1.10]). Furthermore, \cite{Liu23}[Theorem~1.15] proves Conjecture~\ref{KovacsCon} for $n\geq 3$, $p=2$ and $\binom{r}{2}\geq n-1$. Using similar technique as for proof of Theorem~\ref{Theorem:Main}, we also prove the following characterization using symmetric powers of vector bundles.

\begin{theorem}\label{Theorem:Main4}
 Let $X$ be a smooth complex projective variety of dimension $n$. If there exists an ample vector bundle $\mathcal{E}$ of rank $r$ on $X$ and a positive integer $p$ such that $\Sym^p\mathcal{E}\subseteq\Sym^pT_X$, then either $X\cong \mathbb P^n$, or $(X, \mathcal{E})\cong (Q_n, \mathcal{O}_{Q_n}(1)^{\oplus r})$ where $Q_n\subseteq \mathbb{P}^{n+1}$ is a smooth quadric hypersurface in $\mathbb{P}^{n+1}$. Furthermore, if $X\cong \mathbb{P}^n$, then $\det\mathcal{E}\cong\mathcal{O}_{\mathbb{P}^n}(l)$ with $l=r$ or $r+1$ and:
\begin{enumerate}[leftmargin=*]
\item If $r<n$, then $\mathcal{E}\cong \mathcal{O}_{\mathbb{P}^n}(1)^{\oplus r}$.
\item If $r=n$ and $l=n$, then $\mathcal{E}\cong \mathcal{O}_{\mathbb{P}^n}(1)^{\oplus n}$.
\item If $r=n$ and $l=n+1$, then $\mathcal{E}\cong T_{\mathbb{P}^n}$.
\end{enumerate}   
\end{theorem}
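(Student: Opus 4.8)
The plan is to follow the strategy of the proof of Theorem~\ref{Theorem:Main}, replacing the combinatorics of exterior powers by that of symmetric powers, and to argue by induction on $n=\dim X$, the case $n=1$ being trivial. When $p=1$ the hypothesis merely says that $T_X$ contains the ample subsheaf $\mathcal E$, a situation resolved by Andreatta--Wi\'sniewski and by Araujo--Druel--Kov\'acs (refined in \cite{Liu23}), so I may assume $p\ge 2$. From $\Sym^p\mathcal E\hookrightarrow\Sym^pT_X$ I would first record two formal facts: comparing ranks forces $r\le n$; and, since $\Sym^pT_X$ is a quotient of $\Omega_X^{\otimes p}$ while containing the ample subsheaf $\Sym^p\mathcal E$, the canonical bundle $K_X$ cannot be pseudo-effective (generic semipositivity, after Miyaoka and Boucksom--Demailly--P\u aun--Peternell), so $X$ is uniruled; arguing on subvarieties together with bend-and-break, exactly as in Theorem~\ref{Theorem:Main}, one gets that $X$ is in fact Fano.

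Next I would restrict to minimal rational curves. Fix a minimal dominating family, and let $f\colon\mathbb P^1\to X$ be a general member through a general point; then $f$ is finite, $f^*\mathcal E\cong\bigoplus_j\mathcal O_{\mathbb P^1}(e_j)$ with all $e_j\ge 1$ (ampleness of $\mathcal E$), and $f^*T_X\cong\mathcal O(2)\oplus\mathcal O(1)^{\oplus d}\oplus\mathcal O^{\oplus(n-1-d)}$ with $d=-K_X\cdot\ell-2$. Pulling the inclusion back gives an embedding of vector bundles on $\mathbb P^1$,
\[
\Sym^p\!\Big(\textstyle\bigoplus_j\mathcal O(e_j)\Big)\ \hookrightarrow\ \Sym^p(f^*T_X),
\]
in which \emph{every} summand on the left has degree $\ge p$. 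Since $\bigoplus_i\mathcal O(c_i)$ embeds into $\bigoplus_k\mathcal O(b_k)$ on $\mathbb P^1$ only when the two degree sequences, sorted non-increasingly, dominate term by term, this forces $\Sym^p(f^*T_X)$ to have at least $\binom{p+r-1}{p}$ summands of degree $\ge p$. A weight count shows that a summand of $\Sym^p(\mathcal O(2)\oplus\mathcal O(1)^{\oplus d}\oplus\mathcal O^{\oplus(n-1-d)})$ has degree $\ge p$ exactly when its exponent on the $\mathcal O(2)$-factor is at least its total exponent on the trivial factors; in particular the summand supported entirely on the trivial factors has degree $0$, so when $r=n$ (in which case $\binom{p+r-1}{p}=\rank\Sym^pT_X$) there can be no trivial factor at all, i.e.\ $d=n-1$, and a slightly sharper version of the same count excludes $d\le n-3$ once $r$ is not too small relative to $n$. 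In that range $-K_X\cdot\ell\ge n$, and, the family being minimal and unsplit, Cho--Miyaoka--Shepherd-Barron yields $X\cong\mathbb P^n$ (degree $\ge n+1$) or $X\cong Q_n$ (degree exactly $n$).

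The remaining cases---$r$ small relative to $n$ (in the extreme $r=1$, where the hypothesis reads $L^{\otimes p}\subseteq\Sym^pT_X$ for an ample line bundle $L$) and the exclusion of Fano varieties other than $\mathbb P^n$ and $Q_n$---are the technical core, to be treated exactly as in Theorem~\ref{Theorem:Main}. For small $r$ one cannot conclude from a single curve (when $r=1$ the inequality above only gives $L\cdot\ell\le 2$): one must deform the rational curve within the covering family, playing the positivity of $\Sym^p\mathcal E$ along the deformations against the splitting type of $T_X$, in the manner of Wahl's theorem. To reduce to Picard number one, one observes that a fibre-type Mori contraction $X\to Y$ would, via the graded pieces of the $\Sym^p$-filtration of $T_X|_F$ attached to $T_F\subseteq T_X|_F$ together with the ampleness of $\Sym^p\mathcal E|_F$, make a general fibre $F$ inherit the hypothesis for symmetric powers of $T_F$ (with a possibly smaller exponent), hence $F\cong\mathbb P^m$ or $Q^m$ by induction; one then derives a contradiction, or directly the asserted structure, from the resulting bundle structure of $X$. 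This step---the small-rank analysis together with the Picard-number-one reduction---is where I expect the main work to lie.

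Finally, once $X\cong\mathbb P^n$ or $X\cong Q_n$, I would pin down $\mathcal E$ by comparing slopes, using that $\Sym^pT_X$ is (semi)stable because $T_X$ is (for $n\ge 3$; the surface case is an elementary separate check). Writing $\det\mathcal E=\mathcal O_X(l)$ with $l\ge r$, the inclusion $\Sym^p\mathcal E\subseteq\Sym^pT_X$ and semistability force $l=r$ whenever $\rank\Sym^p\mathcal E<\rank\Sym^pT_X$, i.e.\ whenever $r<n$; restricting to lines and applying the classical rigidity of uniform bundles (Van de Ven) then gives $\mathcal E\cong\mathcal O_X(1)^{\oplus r}$. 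This covers cases (1) and (2) (with $X\cong\mathbb P^n$, $l=r$) and the quadric alternative (with $X\cong Q_n$, where a further slope comparison forbids $r=n$, a split $\Sym^p\mathcal E$ being unable to coincide with the indecomposable $\Sym^pT_{Q_n}$). When $X\cong\mathbb P^n$, $r=n$, $l=n+1$, the determinants of $\Sym^p\mathcal E$ and $\Sym^pT_{\mathbb P^n}$ agree, so the induced injection of determinants is multiplication by a nonzero scalar and the inclusion is forced to be an isomorphism; comparing relative Veronese embeddings gives $\mathbb P(\mathcal E)\cong\mathbb P(T_{\mathbb P^n})$ over $\mathbb P^n$, and matching first Chern classes yields $\mathcal E\cong T_{\mathbb P^n}$. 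This is case (3); the split bundle $\mathcal O_{\mathbb P^n}(2)\oplus\mathcal O_{\mathbb P^n}(1)^{\oplus(n-1)}$ appearing in the exterior-power theorem is excluded here precisely because its $p$-th symmetric power is decomposable, whereas $\Sym^pT_{\mathbb P^n}$ is not.
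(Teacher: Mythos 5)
There is a genuine gap: what you have written is a plan, not a proof, and you say so yourself --- the small-rank analysis, the exclusion of Fano varieties other than $\mathbb P^n$ and $Q_n$, and the reduction to Picard number one are declared to be ``the technical core, to be treated exactly as in Theorem~\ref{Theorem:Main}'', i.e.\ they are deferred to an argument you never give (and which is not how the paper proves Theorem~\ref{Theorem:Main} either). Your minimal-rational-curves computation genuinely settles only the extreme case: when $r=n$ the dominance criterion for maps of split bundles on $\mathbb P^1$ does force $f^*T_X\cong\mathcal O(2)\oplus\mathcal O(1)^{\oplus(n-1)}$ and CMSB applies, but for $r<n$ a single standard curve gives almost nothing (as you note for $r=1$), the ``bend-and-break on subvarieties'' step showing $X$ is Fano is not spelled out, and the sentence ``$\Sym^pT_X$ is a quotient of $\Omega_X^{\otimes p}$'' should in any case read $T_X^{\otimes p}$. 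The paper avoids all of this: from $\Sym^p\mathcal E\subseteq\Sym^pT_X$ it builds the injection $\det(\Sym^p\mathcal E)\hookrightarrow\bigwedge^{f'}(\Sym^pT_X)\hookrightarrow(\Sym^pT_X)^{\otimes f'}\hookrightarrow T_X^{\otimes pf'}$ with $f'=\binom{p+r-1}{p}$, uses $pf'=ra'$ to read this as a nonzero section of $T_X^{\otimes\rank\mathcal F}\otimes\det(\mathcal F)^{-1}$ for $\mathcal F=\mathcal E^{\oplus a'}$, and then quotes Druel--Paris (Theorem~\ref{thm:DP}) to get $X\cong\mathbb P^n$ or $(X,\mathcal E)\cong(Q_n,\mathcal O_{Q_n}(1)^{\oplus r})$ --- note in particular that on $Q_n$ the identification of $\mathcal E$ comes for free, whereas your ``further slope comparison forbids $r=n$'' remark is aimed at a claim the theorem does not make (any $r\le n$ with $\mathcal E\cong\mathcal O_{Q_n}(1)^{\oplus r}$ is allowed) and is not an argument.

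The endgame on $\mathbb P^n$ also has a flawed step. In case (3) you correctly deduce from $l=n+1$, $r=n$ that the inclusion $\Sym^p\mathcal E\subseteq\Sym^pT_{\mathbb P^n}$ is an isomorphism of bundles, but you then pass to $\mathbb P(\mathcal E)\cong\mathbb P(T_{\mathbb P^n})$ ``by comparing relative Veronese embeddings'': an abstract bundle isomorphism $\Sym^p\mathcal E\cong\Sym^pT_{\mathbb P^n}$ carries no algebra structure and need not map one Veronese onto the other, so this inference is unjustified; likewise the indecomposability of $\Sym^pT_{\mathbb P^n}$, which your fallback argument needs, is asserted but not proved. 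The paper instead restricts to lines (Proposition~\ref{prop:splittypesymmetric}) to see that $\mathcal E$ is uniform of splitting type $(1,\dots,1)$ or $(2,1,\dots,1)$, applies the Elencwajg--Hirschowitz--Schneider theorem (Theorem~\ref{thm:EHS}), and kills the split candidate $\mathcal O_{\mathbb P^n}(2)\oplus\mathcal O_{\mathbb P^n}(1)^{\oplus(n-1)}$ by the vanishing $H^0\bigl(\mathbb P^n,\Sym^pT_{\mathbb P^n}(-2p)\bigr)=0$ from \cite{Shao24}; some such concrete vanishing (or a proof of indecomposability) is exactly what your sketch is missing at this point. Your slope/semistability observation for $r<n$ is fine as far as it goes, but it, too, ends by invoking a uniform-bundle rigidity statement without the determinant bookkeeping that makes it apply. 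In short: the architecture you propose could perhaps be completed, but as written the decisive classification steps are either deferred or rest on non-sequiturs, while the paper's proof runs through Lemma~\ref{lem:wedge-to-tensor-section}, Theorem~\ref{Theorem:Main2symmetric} (via \cite{DP}) and Theorem~\ref{Theorem:Main3} (via \cite{EHS80} and \cite{Shao24}).
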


\subsection{Existing results and background}

We briefly review some of the existing characterizations of projective spaces and hyperquadrics of similar flavor, which serve as motivation for Conjecture~\ref{KovacsCon}. As stated earlier, the first of such results is the following 1979 result of Mori.

\begin{theorem}[\cite{Mor79}]\label{Thm:Mor79}
    Let $X$ be a smooth complex projective variety of dimension $n$ with ample tangent bundle. Then $X\cong \mathbb{P}^n$. 
\end{theorem}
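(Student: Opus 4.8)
The plan is to reproduce Mori's original argument, whose essential idea is a reduction to positive characteristic. Ampleness of $T_X$ immediately gives that $-K_X = \det T_X$ is ample, so $X$ is Fano; the first goal is to produce, through an arbitrary point $x\in X$, a rational curve of small anticanonical degree. For this I would spread $X$ out over a finitely generated $\mathbb{Z}$-subalgebra of $\mathbb{C}$ and reduce modulo a large prime $p$, obtaining $X_p$ over $\overline{\mathbb{F}_p}$ with $T_{X_p}$ still ample. Starting from any smooth curve with a non-constant map $f\colon C\to X_p$ and a point $x_0\in C$ over $x$, I would precompose with a high power $F^e$ of the Frobenius: this multiplies the degree $-K_{X_p}\cdot(F^e\circ f)_*C$ by $p^e$ while leaving the genus $g$ of $C$ unchanged. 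A standard deformation-theoretic estimate gives $\dim_{[h]}\mathrm{Hom}(C,X_p;\, x_0\mapsto x)\ge -K_{X_p}\cdot h_*C - n g$ for $h=F^e\circ f$, so this scheme becomes positive-dimensional once the degree exceeds $n g$. Mori's bend-and-break lemma then forces such a family to degenerate, and extracts from the limit a rational curve through $x$; breaking repeatedly yields a rational curve $C_p\subseteq X_p$ through $x$ with $0 < -K_{X_p}\cdot C_p \le n+1$.

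Since this holds for infinitely many $p$ and the bound $n+1$ is uniform, the next step is to lift back to characteristic zero: the component of the Hom-scheme parametrising rational curves of degree $\le n+1$ through a point is of finite type over the base ring, so a specialisation argument produces a rational curve $C\subseteq X$ over $\mathbb{C}$ through a general point $x$ with $-K_X\cdot C\le n+1$. Now ampleness pins down its geometry. Choosing $\nu\colon\mathbb{P}^1\to X$ of this kind with $-K_X\cdot C$ minimal and writing $\nu^*T_X\cong\bigoplus_{i=1}^n\mathcal{O}_{\mathbb{P}^1}(a_i)$, ampleness of $T_X$ forces $a_i\ge 1$ for all $i$, while the inclusion $T_{\mathbb{P}^1}=\mathcal{O}_{\mathbb{P}^1}(2)\hookrightarrow\nu^*T_X$ forces some $a_i\ge 2$; hence $-K_X\cdot C = \sum a_i \ge n+1$, so in fact $-K_X\cdot C = n+1$ and the splitting type is $(2,1,\dots,1)$. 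Thus $C$ is a free, standard rational curve of minimal degree (a ``line''), every rational curve on $X$ has $-K_X$-degree at least $n+1$, and one checks along the way that $\rho(X)=1$.

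Finally one identifies $X$ using the family of lines. Let $V_x$ be the space of lines through $x$; from the splitting type, deformations of $\nu$ fixing $x$ form an $(n-1)$-dimensional family, and the tangent-direction map $\tau\colon V_x\dashrightarrow\mathbb{P}(T_xX)\cong\mathbb{P}^{n-1}$ is well defined. The splitting type prevents a one-parameter family of distinct lines through $x$ all tangent to the same direction from degenerating, so $\tau$ is generically finite; hence $\dim V_x = n-1$ and the lines through $x$ sweep out all of $X$. A degree computation — a general point $y$ is joined to $x$ by such a line, the family of lines is unsplit, and $\mathrm{Pic}(X)=\mathbb{Z}$ with $-K_X=(n+1)H$ for the ample generator $H$ dual to the line class — then shows that the morphism defined by the lines realises $X$ as $\mathbb{P}^n$ (alternatively, once $-K_X=(n+1)H$ is established one may invoke the Kobayashi–Ochiai criterion).

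I expect the main obstacle to be the positive-characteristic core, namely making bend-and-break, the Frobenius degree-boosting trick, and the lifting of the resulting rational curves back to $\mathbb{C}$ all work together cleanly; this is precisely Mori's original breakthrough and is technically demanding. A secondary difficulty is upgrading ``$\tau$ generically finite'' to a genuine isomorphism and controlling the lines globally rather than only near $x$, which in modern treatments is handled by the Cho–Miyaoka–Shepherd-Barron and Kebekus arguments on varieties of minimal rational tangents.
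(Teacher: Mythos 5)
The paper does not prove this statement: it is quoted as classical background with a citation to \cite{Mor79}, so there is no internal proof to compare against. Your outline is a faithful summary of Mori's original argument --- reduction modulo $p$, Frobenius amplification of degree, bend-and-break to produce a rational curve with $0<-K_X\cdot C\le n+1$ through any point, lifting back to characteristic zero, the splitting type $(2,1,\dots,1)$ forced by ampleness, and the identification of $X$ with $\mathbb{P}^n$ via the family of minimal rational curves. Two caveats: as you yourself note, the characteristic-$p$ core and the final identification are exactly the hard steps and are left as black boxes, so this is a correct roadmap rather than a proof; and the closing appeal to Kobayashi--Ochiai or to Cho--Miyaoka--Shepherd-Barron/Kebekus is a modern shortcut, not Mori's original finish, which analyzes the family of lines through a point directly (in particular, $\mathrm{Pic}(X)=\mathbb{Z}$ is a consequence of that analysis, not an input available ``along the way'').
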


This was followed by a related result of Wahl in 1983 (which was also obtained by Druel in 2004, using different methods).

\begin{theorem}[\cite{Wah83}, \cite{Dru04}]\label{Thm:WahDru}
    Let $X$ be a smooth complex projective variety of dimension $n$, and assume that the tangent bundle $T_X$ contains an ample line bundle $\mathcal{L}$. Then $(X,\mathcal{L})\cong (\mathbb{P}^n, \mathcal{O}_{\mathbb{P}^n}(1))$ or $(X,\mathcal{L})\cong (\mathbb{P}^1, \mathcal{O}_{\mathbb{P}^1}(2))$.
\end{theorem}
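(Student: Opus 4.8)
The plan is to follow the foliation-plus-minimal-rational-curves route (essentially Druel's proof), combining Mori theory with the Cho--Miyaoka--Shepherd-Barron/Kebekus characterization of $\mathbb{P}^n$. First I would dispose of $n=1$: an ample sub-line-bundle of $T_X$ forces $\deg T_X>0$, so $X\cong\mathbb{P}^1$, and then $\mathcal{L}\subseteq T_{\mathbb{P}^1}=\mathcal{O}_{\mathbb{P}^1}(2)$ is either $\mathcal{O}_{\mathbb{P}^1}(1)$ (which belongs to the $\mathbb{P}^n$-case) or $\mathcal{O}_{\mathbb{P}^1}(2)$ (the exceptional case). So assume $n\geq2$; the goal becomes $X\cong\mathbb{P}^n$ with $\mathcal{L}\cong\mathcal{O}_{\mathbb{P}^n}(1)$. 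Next, $X$ is uniruled: otherwise Miyaoka's generic semipositivity theorem would make $\Omega^1_X$ nef on a general complete-intersection curve $C_0$, so $T_X|_{C_0}$ would have no sub-line-bundle of positive degree, contradicting ampleness of $\mathcal{L}|_{C_0}$. Then I would saturate the inclusion $\mathcal{L}\hookrightarrow T_X$ to a rank-one foliation $\mathcal{F}\subseteq T_X$; writing $\mathcal{F}\cong\mathcal{L}\otimes\mathcal{O}_X(D)$ with $D$ effective, $-K_{\mathcal{F}}=\mathcal{F}$ has strictly positive degree on every irreducible curve avoiding $D$ and $\mathrm{Sing}(\mathcal{F})$, in particular on a general leaf. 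By the algebraic-integrability theorems for foliations with this kind of positivity (Bogomolov--McQuillan; Kebekus--Sol\'a Conde--Toma), $\mathcal{F}$ is algebraically integrable with rational leaves, so $X$ is dominated by a family of rational curves tangent to $\mathcal{F}$.

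For a general member $C$ of this family, with normalization $f\colon\mathbb{P}^1\to X$, freeness gives $f^*T_X\cong\bigoplus_{i=1}^n\mathcal{O}_{\mathbb{P}^1}(a_i)$ with $a_1\geq\cdots\geq a_n\geq0$; tangency to $\mathcal{F}$ makes the differential factor as $T_{\mathbb{P}^1}=\mathcal{O}_{\mathbb{P}^1}(2)\hookrightarrow f^*\mathcal{F}\hookrightarrow f^*T_X$, forcing $a_1\geq2$, while $\mathcal{L}|_C\hookrightarrow f^*T_X$ together with ampleness of $\mathcal{L}$ gives $1\leq\deg\mathcal{L}|_C\leq a_1$. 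The main step, which I expect to be the genuine obstacle, is to upgrade this to $X\cong\mathbb{P}^n$. I would pass to a minimal dominating family $\mathcal{H}$ of rational curves, so that a general $C\in\mathcal{H}$ has the standard splitting $f^*T_X\cong\mathcal{O}(2)\oplus\mathcal{O}(1)^{\oplus(\ell-2)}\oplus\mathcal{O}^{\oplus(n-\ell+1)}$ with length $\ell=-K_X\cdot C\leq n+1$; by Cho--Miyaoka--Shepherd-Barron and Kebekus it then suffices to prove $\ell=n+1$, i.e. that $T_X|_C$ is ample. The heart of the matter is to extract this from the ample line bundle sitting inside $T_X$. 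Concretely I would study the rational quotient $\varphi\colon X\dashrightarrow Y$ by $\mathcal{F}$ (so $\dim Y=n-1$): if $\varphi$ were a morphism its fibres would be the leaves, so $\mathcal{L}$ would embed into the relative tangent bundle of a $\mathbb{P}^1$-fibration over a positive-dimensional base, which is impossible for an ample line bundle (the restriction to a suitable section would be an ample bundle with effective inverse); hence $\varphi$ has nonempty indeterminacy, and analyzing it, together with the rationality and low anticanonical degree of the leaves, should force $\varphi$ to be the linear projection of $\mathbb{P}^n$ from a point, i.e. $X\cong\mathbb{P}^n$. The delicate parts are the loci where $\mathcal{F}$ fails to be a subbundle, the structure of the degenerate leaves, and transporting positivity of $\mathcal{L}$ across the modification resolving $\varphi$. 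Alternatively one can route this step through Araujo--Druel's classification of Fano foliations of high index, or through Wahl's original characteristic-$p$ argument: after reduction mod $p$, the $p$-th power of the twisted derivation attached to $\mathcal{L}\hookrightarrow T_X$ lies in $H^0(T_{X_p}\otimes\mathcal{L}_p^{-p})=0$ for $p\gg0$, so $\mathcal{F}$ becomes $p$-closed and yields a purely inseparable quotient of degree $p$, after which one identifies $X_p$ with $\mathbb{P}^n$ and lifts.

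Finally, once $X\cong\mathbb{P}^n$ with $n\geq2$, a short computation with the Euler sequence gives $H^0(\mathbb{P}^n,T_{\mathbb{P}^n}(-e))=0$ for every $e\geq2$ and $H^0(\mathbb{P}^n,T_{\mathbb{P}^n}(-1))\neq0$, so the only ample line bundle admitting an injection into $T_{\mathbb{P}^n}$ is $\mathcal{O}_{\mathbb{P}^n}(1)$. Combined with the $n=1$ analysis this produces exactly the two cases $(\mathbb{P}^n,\mathcal{O}_{\mathbb{P}^n}(1))$ and $(\mathbb{P}^1,\mathcal{O}_{\mathbb{P}^1}(2))$ of the statement.
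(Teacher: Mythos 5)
There is a genuine gap. Note first that the paper itself contains no proof of Theorem~\ref{Thm:WahDru}: it is quoted from \cite{Wah83} and \cite{Dru04} as background, and within the paper it is only recovered a posteriori as the $p=1$, $r=1$ case of Theorem~\ref{Theorem:Main}, whose proof runs through the Druel--Paris result (Theorem~\ref{thm:DP}). So your write-up has to stand on its own as a proof of the classical statement, and it does not: it is an outline of Druel's strategy (with Wahl's as a fallback) in which the decisive step is explicitly left open. The parts you do carry out are fine and standard --- the $n=1$ case, the final computation that $\mathcal{O}_{\mathbb{P}^n}(1)$ is the only ample line bundle mapping nontrivially to $T_{\mathbb{P}^n}$ for $n\geq 2$, uniruledness via Miyaoka's generic semipositivity, saturation of $\mathcal{L}$ to a rank-one foliation $\mathcal{F}\cong\mathcal{L}\otimes\mathcal{O}_X(D)$, and algebraic integrability with rational leaves via Bogomolov--McQuillan/Kebekus--Sol\'a Conde--Toma. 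But the implication you yourself call ``the genuine obstacle,'' namely that for $n\geq 2$ an ample line subbundle of $T_X$ forces $X\cong\mathbb{P}^n$, is never established, and none of the three routes you gesture at closes it.

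Concretely: (i) on a minimal free rational curve the splitting $f^*T_X\cong\mathcal{O}(2)\oplus\mathcal{O}(1)^{\oplus(\ell-2)}\oplus\mathcal{O}^{\oplus(n-\ell+1)}$ is perfectly compatible with $\deg\mathcal{L}|_C\geq 1$ for any $\ell$, so you need a mechanism forcing $\ell=n+1$ before Cho--Miyaoka--Shepherd-Barron/Kebekus applies, and you supply none; that mechanism is precisely the content of the cited proofs. (ii) The rational-quotient discussion assumes the conclusion (``should force $\varphi$ to be the linear projection of $\mathbb{P}^n$ from a point''), and the exclusion of the fibration case is not justified as stated: an ample line bundle restricts to $\mathcal{O}(1)$ or $\mathcal{O}(2)$ on fibres of a $\mathbb{P}^1$-fibration without contradiction, and your parenthetical appeal to ``a suitable section'' with nonpositive normal degree fails in general (e.g.\ ruled surfaces over an elliptic curve with invariant $e=-1$ have no such section), so a genuinely global argument is needed. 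Moreover $\mathcal{F}=\mathcal{L}\otimes\mathcal{O}_X(D)$ with $D$ effective need not be ample, so invoking classifications of Fano foliations of large index also requires justification. (iii) The Wahl alternative compresses the actual content of \cite{Wah83} into ``one identifies $X_p$ with $\mathbb{P}^n$ and lifts'': both the vanishing $H^0(T_{X_p}\otimes\mathcal{L}_p^{-p})=0$ after spreading out and, much more seriously, the analysis of the purely inseparable quotient and the lift back to characteristic $0$ are left unproved. As it stands, the proposal correctly identifies where the difficulty lies but does not prove the theorem.
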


In 1998, the above result was generalized by \cite{CP98} from line bundle $\mathcal{L}$ to ample vector bundles $\mathcal{E}$ of rank $r=n, n-1,$ and $n-2$. Interpolating Theorem~\ref{Thm:Mor79} and Theorem~\ref{Thm:WahDru}, Andreatta and Wiśniewski obtained the following characterization in 2001.

\begin{theorem}[\cite{AW01}]\label{Thm:AW01}
    Let $X$ be a smooth complex projective variety of dimension $n$ and assume that the tangent bundle $T_X$ contains an ample vector bundle $\mathcal{E}$ of rank $r$. Then either $(X,\mathcal{E})\cong (\mathbb{P}^n, \mathcal{O}_{\mathbb{P}^n}(1)^{\oplus r})$ or, $r=n$ and $(X,\mathcal{E})\cong (\mathbb{P}^n, T_{\mathbb{P}^n})$.
\end{theorem}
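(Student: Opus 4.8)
The plan is to run the usual strategy for such characterizations — produce rational curves, exploit the positivity of $\mathcal{E}$ along them, and invoke the characterizations of $\mathbb{P}^n$ by minimal rational curves — combined with an induction on $n=\dim X$ via the Minimal Model Program. First I would verify that $X$ is Fano: an ample subsheaf of $T_X$ forces $X$ to be uniruled (by Mori's bend-and-break applied to a curve along which $T_X$ is sufficiently positive), and $-K_X$ is then ample. Fix a minimal dominating family of rational curves, let $C$ be a general member with normalization $f\colon\mathbb{P}^1\to X$, so that $f^{*}T_X\cong\mathcal{O}(2)\oplus\mathcal{O}(1)^{\oplus(\ell-2)}\oplus\mathcal{O}^{\oplus(n+1-\ell)}$ with $\ell=-K_X\cdot C\le n+1$. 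Since $\mathcal{E}\hookrightarrow T_X$, the pullback $f^{*}\mathcal{E}$ is ample on $\mathbb{P}^1$, hence a sum of line bundles of positive degree, and therefore maps to zero into every summand of non-positive degree; so $f^{*}\mathcal{E}$ embeds into the positive part of $f^{*}T_X$, which has rank $\ell-1$. This yields $r\le\ell-1$, and in particular $r\le n$ and $\ell\ge r+1$.

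Next I would show $X\cong\mathbb{P}^n$, arguing by induction on $n$. Pick an elementary $K_X$-negative extremal contraction $\varphi\colon X\to Y$ with extremal ray $R$; on a general fibre $F$ of $\varphi$ (or of $E\to\varphi(E)$ when $\varphi$ is birational with exceptional locus $E$) the composite $\mathcal{E}|_{F}\to N_{F/X}$ vanishes, because an ample bundle has no nonzero map to the bundle of directions $\varphi$ contracts; hence $\mathcal{E}|_{F}\subseteq T_{F}$ is an ample subsheaf of the tangent bundle of the lower-dimensional Fano variety $F$, so $F\cong\mathbb{P}^{\dim F}$ by the inductive hypothesis. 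Combining this with Wiśniewski's inequality $\dim E+\dim F\ge n+\ell(R)-1$, the structure of projective-space fibrations, and a separate analysis of the case $\rho(X)\ge 2$ (using two such contractions), one reduces to $\rho(X)=1$, where the conclusion $X\cong\mathbb{P}^n$ is precisely \cite{Ross11}[Theorem~1.1] specialized to $p=1$. For the special range $r\in\{n-1,n\}$ there is a shortcut avoiding the induction: then $\ell\ge n$, so by the Cho--Miyaoka--Shepherd-Barron and Kebekus theorems $X\cong\mathbb{P}^n$ or $X\cong Q^n$, and $Q^n$ is ruled out — for $n\ge 3$ by restricting $\mathcal{E}$ to a line, which forces $\det\mathcal{E}\cong\mathcal{O}_{Q^n}(d)$ with $d\in\{r,r+1\}$ and hence, via $\det\mathcal{E}=\textstyle\bigwedge^{r}\mathcal{E}\hookrightarrow\bigwedge^{r}T_{Q^n}\cong\Omega^{\,n-r}_{Q^n}(n)$, a nonzero section of $\Omega^{q}_{Q^n}(q)$ or $\Omega^{q}_{Q^n}(q-1)$ with $1\le q=n-r\le n-1$, contradicting the vanishing of $H^{0}\!\big(Q^n,\Omega^{q}_{Q^n}(q)\big)$ and $H^{0}\!\big(Q^n,\Omega^{q}_{Q^n}(q-1)\big)$ (a routine consequence of the conormal sequence and Bott's formula on $\mathbb{P}^{n+1}$), and for $n=2$ directly since $Q^2\cong\mathbb{P}^1\times\mathbb{P}^1$ and its tangent bundle has no ample sub-line-bundle.

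Finally I would identify $\mathcal{E}$ on $X\cong\mathbb{P}^n$. Restricting $\mathcal{E}$ to a line $L$, where $T_{\mathbb{P}^n}|_{L}\cong\mathcal{O}(2)\oplus\mathcal{O}(1)^{\oplus(n-1)}$, the splitting type of $\mathcal{E}$ on $L$ consists of $r$ integers $\ge1$, at most one of which is $\ge2$, so $\det\mathcal{E}\cong\mathcal{O}_{\mathbb{P}^n}(l)$ with $l\in\{r,r+1\}$; since $\det\mathcal{E}$ embeds into $\bigwedge^{r}T_{\mathbb{P}^n}\cong\Omega^{\,n-r}_{\mathbb{P}^n}(n+1)$, we need $H^{0}\!\big(\mathbb{P}^n,\Omega^{\,n-r}_{\mathbb{P}^n}(n+1-l)\big)\ne 0$, which by Bott's formula holds for $l=r$ always and for $l=r+1$ only when $r=n$. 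If $l=r$, then $\mathcal{E}|_{L}\cong\mathcal{O}(1)^{\oplus r}$ for every line $L$, so $\mathcal{E}(-1)$ is trivial on every line and hence $\mathcal{E}\cong\mathcal{O}_{\mathbb{P}^n}(1)^{\oplus r}$; if $l=r+1$ (so $r=n$), then $\mathcal{E}$ and $T_{\mathbb{P}^n}$ are locally free of the same rank and determinant, so the torsion quotient $T_{\mathbb{P}^n}/\mathcal{E}$ has vanishing first Chern class and finite projective dimension and must therefore be zero by the Auslander--Buchsbaum formula, giving $\mathcal{E}\cong T_{\mathbb{P}^n}$.

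The main obstacle is the step producing $X\cong\mathbb{P}^n$ in the regime $r\le n-2$: the minimal-rational-curve length bound only yields $\ell\ge r+1$, far from the value $n+1$ that characterizes $\mathbb{P}^n$, so one genuinely needs the inductive analysis of extremal contractions — in particular handling birational contractions via Wiśniewski's inequality and excluding $\rho(X)\ge 2$ — on top of the already substantial Picard-number-one input. The concluding identification of $\mathcal{E}$ is, by comparison, routine.
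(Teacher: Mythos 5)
First, note how this statement functions in the paper: it is quoted from \cite{AW01} as background and is never proved here; within the paper's own logic it is recovered as the $p=1$ instance of Theorem~\ref{Theorem:Main}, whose proof goes through the Druel--Paris criterion (Theorem~\ref{thm:DP}) applied to $\mathcal{F}=\mathcal{E}^{\oplus a}$, followed by the splitting-type analysis on lines and the uniform-bundle theorem (Proposition~\ref{prop:splittype}, Theorem~\ref{thm:EHS}, Theorem~\ref{Theorem:Main3}). Your plan is a genuinely different route --- an attempt to reprove Andreatta--Wi\'sniewski directly by rational curves and the MMP --- and its final stage is fine: once $X\cong\mathbb{P}^n$ is known, your determinant/Bott argument identifying $\mathcal{E}$ (trivializing $\mathcal{E}(-1)$ on lines when $l=r$, and comparing $\det\mathcal{E}$ with $\det T_{\mathbb{P}^n}$ when $l=r+1$, $r=n$) is correct and for $p=1$ is even lighter than the paper's appeal to \cite{EHS80}; likewise the exclusion of $Q^n$ via $H^0(Q^n,\Omega^q_{Q^n}(q))=H^0(Q^n,\Omega^q_{Q^n}(q-1))=0$ matches the paper's use of Snow's vanishing.

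The problem is that the core of the theorem --- proving $X\cong\mathbb{P}^n$ --- is not actually established. Two concrete gaps: (i) ``an ample subsheaf of $T_X$ forces $X$ to be uniruled \dots and $-K_X$ is then ample'' is a non sequitur; uniruledness does not imply Fano, and even the uniruledness step needs Miyaoka's criterion rather than naive bend-and-break, since the quotient $T_X/\mathcal{E}$ may have negative degree on a moving curve, so an ample $\mathcal{E}\subseteq T_X$ does not directly produce a curve with $-K_X\cdot C>0$. Uniruledness suffices for a minimal dominating family and one $K_X$-negative extremal ray, but your later ``two contractions'' argument for $\rho(X)\ge 2$ tacitly uses the unproved Fano hypothesis. (ii) The inductive MMP analysis is only named, not carried out: the vanishing of $\mathcal{E}|_F\to N_{F/X}$ is clear for a smooth general fiber of a fiber-type contraction (trivial normal bundle), but for a birational contraction the fibers of $E\to\varphi(E)$ can be singular, their normal sheaf is not trivial, and the inductive hypothesis (stated for smooth projective varieties) cannot be applied to them without further work; and ``combining Wi\'sniewski's inequality, the structure of projective-space fibrations, and a separate analysis of $\rho(X)\ge 2$'' is precisely the hard content of \cite{AW01}, which is not supplied. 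As written, the argument either defers to that uncarried-out induction or, in the $\rho(X)=1$ case, to the cited result of \cite{Ross11}; for $r\le n-2$ (where your CMSB shortcut is unavailable, as you acknowledge) no complete proof is given. So this is a plan with a genuine gap at its central step, not a proof.
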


Using the theory of variety of minimal rational tangents, Araujo provided a unified geometric approach to Theorems~\ref{Thm:Mor79}--~\ref{Thm:AW01} in \cite{Ara06}. The following characterization for projective spaces and higher dimensional quadric hypersurfaces was proven in 1973 by Kobayashi and Ochiai.

\begin{theorem}[\cite{KO73}]\label{Thm:KO73}
    Let $X$ be a $n$-dimensional compact complex manifold with ample line bundle $\mathcal{L}$. If $c_1(X)\geq (n+1)c_1(\mathcal{L})$ then $X\cong \mathbb{P}^n$. If $c_1(X)=nc_1(\mathcal{L})$ then $X\cong Q_n$, where $Q_n\subseteq \mathbb{P}^{n+1}$ is a hyperquadric.
\end{theorem}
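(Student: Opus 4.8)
The plan is to reduce to the two extremal equalities $-K_X\cong(n+1)\mathcal{L}$ and $-K_X\cong n\mathcal{L}$, then to pin down the Hilbert polynomial $j\mapsto\chi(X,j\mathcal{L})$ exactly by vanishing theorems and recover $X$ from the complete linear system $|\mathcal{L}|$ by a degree count. Since $X$ carries an ample line bundle it is projective, so I would work algebraically throughout. To reduce to equality in the first statement, note that $-K_X-(n+1)\mathcal{L}$ being nef makes $-K_X$ ample, so $X$ is Fano, and bend-and-break \cite{Mor79} produces through a general point a rational curve $C$ with $-K_X\cdot C\le n+1$; since $-K_X\cdot C=(n+1)(\mathcal{L}\cdot C)+(-K_X-(n+1)\mathcal{L})\cdot C\ge n+1$ because $\mathcal{L}$ is ample, we get $-K_X\cdot C=\dim X+1$, and the characterization of $\mathbb{P}^n$ by the existence of such a maximal-degree rational curve through a general point already yields $X\cong\mathbb{P}^n$ and $\mathcal{L}\cong\mathcal{O}_{\mathbb{P}^n}(1)$, settling the first statement. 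It remains to treat the second statement, where $-K_X\cong n\mathcal{L}$ is given; I would handle this together with the equality case $-K_X\cong(n+1)\mathcal{L}$ (obtaining a second proof of the first statement as well) by the following argument from $|\mathcal{L}|$.

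For the Hilbert polynomial, I would use that $X$ Fano gives $H^i(X,\mathcal{O}_X)=0$ for $i>0$, hence $\chi(X,\mathcal{O}_X)=1$ and $\chi(X,K_X)=(-1)^n$ by Serre duality. Writing $K_X+j\mathcal{L}=(j-n-1)\mathcal{L}$ (resp.\ $(j-n)\mathcal{L}$), Serre duality together with Kodaira vanishing in the form $H^p(X,-A)=0$ for $p<n$ and $A$ ample give $H^i(X,j\mathcal{L})=0$ for all $i>0$ whenever $n+1+j>0$ (resp.\ $n+j>0$), while $H^0(X,j\mathcal{L})=0$ for every $j<0$. These data determine the degree-$n$ polynomial $j\mapsto\chi(X,j\mathcal{L})$ completely: it equals $\binom{j+n}{n}$ in the first case, and in the second it is the polynomial with roots $-1,\dots,-(n-1)$, value $1$ at $j=0$ and value $(-1)^n$ at $j=-n$. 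Reading off its leading coefficient $\mathcal{L}^n/n!$ and its value at $j=1$, I would obtain $\mathcal{L}^n=1$ and $h^0(X,\mathcal{L})=n+1$ in the first case, and $\mathcal{L}^n=2$ and $h^0(X,\mathcal{L})=n+2$ in the second.

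Granting that $|\mathcal{L}|$ is base-point free, I would let $\phi=\phi_{|\mathcal{L}|}\colon X\to\mathbb{P}^{h^0(X,\mathcal{L})-1}$ be the associated morphism, so that $\phi^*\mathcal{O}(1)=\mathcal{L}$; ampleness of $\mathcal{L}$ forces $\phi$ to be finite, hence $\phi(X)$ is $n$-dimensional, and it is nondegenerate since $|\mathcal{L}|$ is complete. In the first case $\phi(X)$ is an $n$-dimensional nondegenerate subvariety of $\mathbb{P}^n$, hence $\phi(X)=\mathbb{P}^n$, and $\deg(\phi)\cdot\deg\phi(X)=\mathcal{L}^n=1$ forces $\deg(\phi)=1$; a finite birational morphism onto a smooth (hence normal) variety is an isomorphism, so $X\cong\mathbb{P}^n$. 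In the second case $\phi(X)$ is a nondegenerate $n$-fold in $\mathbb{P}^{n+1}$, so it has degree $\ge2$, and $\deg(\phi)\cdot\deg\phi(X)=\mathcal{L}^n=2$ forces $\deg(\phi)=1$ and $\deg\phi(X)=2$; thus $\phi(X)$ is an irreducible quadric hypersurface, necessarily of rank $\ge3$ and therefore normal, so $\phi$ is an isomorphism onto $\phi(X)$; since $X$ is smooth, so is $\phi(X)$, which is therefore a smooth quadric $Q_n$ with $\mathcal{L}\cong\mathcal{O}_{Q_n}(1)$.

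The hard part will be the base-point freeness of $|\mathcal{L}|$: everything above is formal once it is known. There are three classical routes. First, Fujita's theory of the $\Delta$-genus — here $\Delta(X,\mathcal{L})=n+\mathcal{L}^n-h^0(X,\mathcal{L})=0$, so $(X,\mathcal{L})$ is a polarized variety of minimal degree and in particular $\mathcal{L}$ is very ample. Second, an induction on $n$: a general $D\in|\mathcal{L}|$ has $-K_D=n\mathcal{L}|_D$ (resp.\ $(n-1)\mathcal{L}|_D$) by adjunction, so $D$ falls under the same hypotheses in dimension $n-1$, with base case smooth rational curves, and the surjection $H^0(X,\mathcal{L})\twoheadrightarrow H^0(D,\mathcal{L}|_D)$ coming from $H^1(X,\mathcal{O}_X)=0$ would lift base-point freeness from $D$ to $X$. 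Third, Kobayashi and Ochiai's original complex-analytic analysis of $|\mathcal{L}|$. Whichever route is taken, I expect the real work to lie in establishing base-point freeness together with the control of the singularities and irreducibility of general members needed to make the inductive step rigorous.
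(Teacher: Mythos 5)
This statement is quoted background (Theorem~\ref{Thm:KO73}, attributed to \cite{KO73}); the paper contains no proof of it, so there is nothing internal to compare your sketch against and it must stand on its own. Its architecture is the classical one and the numerology is correct: reducing to the equality cases, using Kodaira vanishing, Serre duality and $H^0(X,j\mathcal{L})=0$ for $j<0$ to pin down $\chi(X,j\mathcal{L})$, hence $\mathcal{L}^n=1,\ h^0(\mathcal{L})=n+1$ (resp.\ $\mathcal{L}^n=2,\ h^0(\mathcal{L})=n+2$), and then analyzing $\phi_{|\mathcal{L}|}$ by the degree count and normality of the image is all fine. But there are two genuine gaps. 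First, ``the characterization of $\mathbb{P}^n$ by the existence of such a maximal-degree rational curve through a general point'' is not a theorem: the blow-up of $\mathbb{P}^2$ at a point is Fano and has rational curves of anticanonical degree $3=n+1$ through every point. The correct criterion (Cho--Miyaoka--Shepherd-Barron, Kebekus) requires $-K_X\cdot C\geq n+1$ for \emph{every} curve through a general point; fortunately that follows immediately from nefness of $-K_X-(n+1)\mathcal{L}$ and ampleness of $\mathcal{L}$, so the bend-and-break detour via \cite{Mor79} is unnecessary and the step is easily repaired --- but as written the quoted characterization is false. (Minor: to upgrade $c_1(X)=nc_1(\mathcal{L})$ to an isomorphism of line bundles $-K_X\cong\mathcal{L}^{\otimes n}$, which your duality and vanishing computations use, note that $X$ is Fano, so $H^1(X,\mathcal{O}_X)=0$ and $c_1$ is injective on the Picard group.)

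The second and more serious issue is that the crux --- base-point freeness, indeed very ampleness, of $|\mathcal{L}|$ --- is never established. Of your three routes, the third (Kobayashi--Ochiai's own analysis) is circular if the aim is to prove their theorem; the second (induction on a ladder of members of $|\mathcal{L}|$) is exactly where the classical difficulty lives, since without control of the base locus you cannot invoke Bertini to produce an irreducible reduced, let alone smooth, member $D\in|\mathcal{L}|$, and you concede this is unresolved; only the first route, Fujita's $\Delta$-genus-zero theorem ($\Delta(X,\mathcal{L})=0$ forces $\mathcal{L}$ very ample and $(X,\mathcal{L})$ a variety of minimal degree), actually closes the argument without circularity. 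So what you have is a correct reduction of the Kobayashi--Ochiai theorem to Fujita's classification (plus the CMSB criterion for the inequality case), not a self-contained proof; if that is the intended scope, say so explicitly and carry the Fujita route through --- it then finishes immediately, even bypassing your degree/normality analysis, since a variety of minimal degree of degree $1$ is $\mathbb{P}^n$ and of degree $2$ is a quadric hypersurface, smooth here because $X$ is. The remaining details you give --- finiteness of $\phi$, nondegeneracy of the image, $\deg(\phi)\cdot\deg\phi(X)=\mathcal{L}^n$, rank $\geq 3$ hence normality of the image quadric, and a finite birational map onto a normal variety being an isomorphism --- are all correct.
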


In 1995, Cho and Sato obtained the following characterization using the second exterior power of the tangent bundle (over arbitrary characteristic).

\begin{theorem}[\cite{ChoSato95}]\label{Thm:ChoSato95}
    Let $X$ be a $n$-dimensional smooth projective variety over an algebraically closed field of arbitrary characteristic such that $\bigwedge^2 T_X$ is ample.
    \begin{enumerate}[leftmargin=*]
        \item if $n\geq 5$, then $X\cong \mathbb{P}^n$ or $Q_n\subseteq \mathbb{P}^n$.
        \item if characteristic is $0$, then (1) holds for $n\geq 3$.
    \end{enumerate}
\end{theorem}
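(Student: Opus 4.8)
The plan is to extract from the ampleness of $\bigwedge^2T_X$ that $-K_X$ is very positive — so positive that every rational curve on $X$ has large anticanonical degree — and then to conclude by the Kobayashi--Ochiai characterization (Theorem~\ref{Thm:KO73}) after reducing to the case of Picard number one. First I would observe that $X$ is Fano: for a rank-$n$ bundle $\mathcal{E}$ one has $\det\bigl(\bigwedge^2\mathcal{E}\bigr)\cong(\det\mathcal{E})^{\otimes(n-1)}$, and the determinant of an ample vector bundle is ample, so (taking $\mathcal{E}=T_X$ and $n\ge 2$) the line bundle $-(n-1)K_X$, hence $-K_X$, is ample. In particular $X$ is covered by rational curves.

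Next I would bound anticanonical degrees from below. For any rational curve $C\subset X$ with normalization $f\colon\mathbb{P}^1\to X$, write $f^*T_X\cong\bigoplus_{i=1}^n\mathcal{O}_{\mathbb{P}^1}(a_i)$; then $f^*\bigwedge^2T_X\cong\bigoplus_{i<j}\mathcal{O}_{\mathbb{P}^1}(a_i+a_j)$, and ampleness forces $a_i+a_j\ge 1$ for all $i<j$. Thus at most one $a_i$ is non-positive and all the others are $\ge 1$, which already gives $-K_X\cdot C=\sum_i a_i\ge n-1$ (and $\ge n$ if all $a_i\ge 1$). Applying this to a general member of a minimal dominating family of rational curves, which is free and standard — $f^*T_X\cong\mathcal{O}(2)\oplus\mathcal{O}(1)^{\oplus p}\oplus\mathcal{O}^{\oplus q}$ with $p+q=n-1$ — the constraint reads $q\le 1$, i.e.\ $p\ge n-2$; combined with the bend-and-break bound $-K_X\cdot C\le n+1$ this pins down $-K_X\cdot C=p+2\in\{n,n+1\}$, so the pseudoindex of $X$ is at least $n-1$.

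To finish I would first reduce to $\rho_X=1$: Wiśniewski's inequality $\rho_X(i_X-1)\le n$ together with $i_X\ge n-1$ forces $\rho_X=1$ as soon as $n\ge 5$, while in characteristic $0$ the cone and contraction theorem together with the structure theory of Fano manifolds of large pseudoindex gives the same for $n\ge 3$. Then, writing $-K_X\cong\mathcal{O}_X(r_XH)$ for the ample generator $H$, a comparison of degrees along the minimal family ($r_X\mid -K_X\cdot C$ and $r_X\le n+1$), together with ruling out the intermediate Fano indices by applying the line-splitting computation to the classification of Fano manifolds of small coindex (Fujita, Mukai), should force $r_X\in\{n,n+1\}$; then Theorem~\ref{Thm:KO73} yields $X\cong\mathbb{P}^n$ when $r_X=n+1$ (the case $q=0$, where $f^*T_X$ is ample) and $X\cong Q_n$ when $r_X=n$ (the case $q=1$).

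The hard part will be this last step: bounding $\rho_X$ and then the Fano index, i.e.\ excluding Fano manifolds of intermediate coindex whose minimal rational curves happen to have the ``right'' splitting type. In arbitrary characteristic one only has Mori theory, Wiśniewski's inequality, and Kobayashi--Ochiai-type results available, and these force $\rho_X=1$ only for $n\ge 5$, which is why part~(1) is stated in that range; in characteristic $0$ the extra input of the variety of minimal rational tangents (Hwang--Mok, Cho--Miyaoka--Shepherd-Barron) handles the remaining low-dimensional, higher-Picard-rank cases and brings the bound down to $n\ge 3$. The case $n=2$ genuinely fails — any del Pezzo surface has $\bigwedge^2T_X=-K_X$ ample — which is consistent with the stated ranges.
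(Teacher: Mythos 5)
This statement is quoted in the paper as background (Theorem~\ref{Thm:ChoSato95}, attributed to \cite{ChoSato95}); the paper gives no proof of it, so your proposal can only be measured against the literature, not against an argument in the text. Your opening steps are fine: $\det\bigl(\bigwedge^2T_X\bigr)\cong\omega_X^{-(n-1)}$ does show $X$ is Fano, and the splitting constraint $a_i+a_j\ge 1$ on every rational curve correctly yields $-K_X\cdot C\ge n-1$ and the dichotomy $-K_X\cdot C\in\{n,n+1\}$ for a minimal free curve. But the proposal has genuine gaps exactly where you flag ``the hard part.'' First, the reduction to $\rho_X=1$ is not justified: the inequality $\rho_X(i_X-1)\le n$ is the generalized Mukai conjecture, not a theorem, and the Wi\'sniewski-type results that are theorems are proved in characteristic $0$, whereas part (1) is an arbitrary-characteristic statement; moreover your appeal to a ``general member of a minimal dominating family, which is free and standard'' is a characteristic-$0$ fact (in positive characteristic the general member of a dominating family need not be free), so the very step that produces the standard splitting is unavailable in the setting of part (1). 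For part (2), pseudoindex $\ge n-1$ alone does not force $\rho_X=1$ when $n=3,4$ (e.g.\ $\mathbb{P}^1\times\mathbb{P}^2$ has pseudoindex $2$); one must use the pairwise constraint $a_i+a_j\ge1$ along curves in each extremal ray and analyze the contractions, which your sketch does not do.

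Second, even granting $\rho_X=1$, Kobayashi--Ochiai only treats index $\ge n$, so you must exclude Fano manifolds of index exactly $n-1$ (del Pezzo manifolds in Fujita's sense, e.g.\ $V_5$ for $n=3$) on which the minimal curves can have the ``right'' splitting type; your proposal leaves this as ``should force $r_X\in\{n,n+1\}$'' via Fujita--Mukai classification, which is both unproved here and, in the case of Mukai's classification, again a characteristic-$0$ (and very heavy) input. This exclusion is the substantive content of the Cho--Sato theorem, and without it the argument does not close. The actual proof in \cite{ChoSato95} works directly with Mori's characteristic-free theory of rational curves and a detailed analysis of the deformations and splitting types, rather than invoking Picard-number-one reductions and coindex classifications.
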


Extending along the above line of results, Araujo-Druel-Kovács proved the following conjecture of Beauville \cite{Bea00}.

\begin{theorem}[\cite{ADK08}]\label{Thm:ADK08}
    Let $X$ be a smooth comlex projective variety of dimension $n$, and let $\mathcal{L}$ be an ample line bundle on $X$. If $H^0(X, \bigwedge^pT_X\otimes\mathcal{L}^{-p})\neq 0$ for some integer $p>0$, then either $(X, \mathcal{L})\cong (\mathbb{P}^n, \mathcal{O}_{\mathbb{P}^n}(1))$, or $p=n$ and $(X,\mathcal{L})\cong (Q_p, \mathcal{O}_{Q_p}(1))$, where $Q_p\subseteq \mathbb{P}^{p+1}$ is a smooth hyperquadric.
\end{theorem}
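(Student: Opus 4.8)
The plan is to deduce the statement directly from Theorem~\ref{Theorem:Main} by specializing the ample vector bundle to a direct sum of copies of $\mathcal{L}$. First observe that if $p>n$ then $\bigwedge^pT_X=0$, so $H^0(X,\bigwedge^pT_X\otimes\mathcal{L}^{-p})=0$ and the hypothesis is vacuous; hence we may assume $1\le p\le n$. Set $\mathcal{E}:=\mathcal{L}^{\oplus p}$, which is an ample vector bundle of rank $r=p$. Since $\bigwedge^p\mathcal{E}=\det\mathcal{E}=\mathcal{L}^{\otimes p}=\mathcal{L}^p$, a global section of $\bigwedge^pT_X\otimes\mathcal{L}^{-p}$ is the same datum as a homomorphism $\mathcal{L}^p=\bigwedge^p\mathcal{E}\to\bigwedge^pT_X$. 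A nonzero homomorphism out of the line bundle $\bigwedge^p\mathcal{E}$ into the locally free (hence torsion-free) sheaf $\bigwedge^pT_X$ is automatically injective, since its kernel is a rank-zero subsheaf of an invertible sheaf and therefore vanishes. Thus the hypothesis $H^0(X,\bigwedge^pT_X\otimes\mathcal{L}^{-p})\neq0$ produces an inclusion $\bigwedge^p\mathcal{E}\subseteq\bigwedge^pT_X$, so Theorem~\ref{Theorem:Main} applies to the pair $(X,\mathcal{E})$.

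Applying Theorem~\ref{Theorem:Main}, either $X\cong\mathbb{P}^n$, or $p=n$ and $(X,\mathcal{E})\cong(Q_p,\mathcal{O}_{Q_p}(1)^{\oplus r})$. In the quadric case the vector-bundle isomorphism $\mathcal{L}^{\oplus p}\cong\mathcal{O}_{Q_p}(1)^{\oplus p}$ forces $\mathcal{L}\cong\mathcal{O}_{Q_p}(1)$ by uniqueness of the decomposition into indecomposable (here line-bundle) summands, which is exactly the conclusion $(X,\mathcal{L})\cong(Q_p,\mathcal{O}_{Q_p}(1))$ with $p=n$. In the case $X\cong\mathbb{P}^n$ I would recover $\mathcal{L}\cong\mathcal{O}_{\mathbb{P}^n}(1)$ from the refined conclusions of Theorem~\ref{Theorem:Main}. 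If $p<n$, i.e. $r<n$, then conclusion (1) gives $\mathcal{E}\cong\mathcal{O}_{\mathbb{P}^n}(1)^{\oplus p}$, whence $\mathcal{L}\cong\mathcal{O}_{\mathbb{P}^n}(1)$ as before. If $p=n$, i.e. $r=n$, write $\mathcal{L}\cong\mathcal{O}_{\mathbb{P}^n}(k)$ with $k\ge1$; then $\det\mathcal{E}=\mathcal{O}_{\mathbb{P}^n}(nk)$, and the constraint $l\in\{r,r+1\}=\{n,n+1\}$ forces $nk=n$ whenever $n\ge2$ (as $nk=n+1$ then has no integral solution), giving $k=1$ and $\mathcal{L}\cong\mathcal{O}_{\mathbb{P}^n}(1)$.

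The only genuine subtleties in this reduction are bookkeeping ones, since all the substantive work is concentrated in Theorem~\ref{Theorem:Main}. The first is the passage from the vector-bundle isomorphism produced by Theorem~\ref{Theorem:Main} back to the line bundle $\mathcal{L}$, which relies on the Krull--Schmidt uniqueness of direct-sum decompositions of vector bundles on a smooth projective variety. The second is the low-dimensional overlap at $n=p=1$: there $X\cong\mathbb{P}^1$ may carry $\mathcal{L}\cong\mathcal{O}_{\mathbb{P}^1}(2)$ (conclusion (3)(b) of Theorem~\ref{Theorem:Main} with $r=n=1$, $l=n+1=2$), which must be recorded under the quadric alternative of the statement via the identification $\mathbb{P}^1\cong Q_1\subseteq\mathbb{P}^2$ and $\mathcal{O}_{\mathbb{P}^1}(2)\cong\mathcal{O}_{Q_1}(1)$. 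For completeness I note that an independent proof not invoking Theorem~\ref{Theorem:Main} would instead restrict the section to a minimal rational curve $C$ through a general point in order to bound $\mathcal{L}\cdot C$ and $-K_X\cdot C$, and then feed these estimates into the Cho--Miyaoka--Shepherd-Barron and Kobayashi--Ochiai (Theorem~\ref{Thm:KO73}) classifications. This Mori-theoretic route is precisely what I expect to be the main obstacle were one to avoid the reduction, and it is exactly the work that Theorem~\ref{Theorem:Main} already subsumes.
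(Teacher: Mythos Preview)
Your proposal is correct and follows exactly the route the paper indicates: the paper does not give an independent proof of Theorem~\ref{Thm:ADK08} but simply records (in the discussion after Theorem~\ref{Thm:ADK08}) that it is recovered from Theorem~\ref{Theorem:Main} by taking $\mathcal{E}=\mathcal{L}^{\oplus p}$, and you have carefully spelled out that reduction, including the recovery of $\mathcal{L}$ from $\mathcal{E}$ and the $n=p=1$ boundary case.
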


Despite their similar nature, no one of Theorems~\ref{Thm:Mor79}--~\ref{Thm:ADK08} imply all the other. From this perspective, Conjecture~\ref{KovacsCon} is interesting, since it implies all of the above: Theorem~\ref{Thm:Mor79} follows by taking $p=1$, $\mathcal{E}=T_X$, Theorem~\ref{Thm:WahDru} by $p=1, \ r=1$, Theorem~\ref{Thm:AW01} by $p=1$, Theorem~\ref{Thm:ChoSato95} (for characteristic $0$) by $p=2$ and $\mathcal{E}=T_X$, Theorem~\ref{Thm:ADK08} by taking $\mathcal{E}=\mathcal{L}^{\oplus p}$, and Theorem~\ref{Thm:KO73} by taking $\mathcal{E}=\mathcal{L}^{\oplus n}$ and $\mathcal{L}^{\oplus n-1}\oplus \mathcal{L}^{\otimes 2}$. Indeed, in \cite{Ross11}, Ross proves Conjecture~\ref{KovacsCon} for varieties with Picard number $1$ and also for $n=2$. More recently in \cite{Liu23}, Liu proved the conjecture in higher dimensions for $p=2$ under the rank constraint $\binom{r}{2}\geq n-1$. 

A parallel line of work replaces exterior powers of tangent bundle by tensor powers. In \cite{DP}, Druel and Paris proved that if $T_X^{\otimes r}$ contains the determinant of an ample vector bundle, then $X$ must be $\mathbb{P}^n$ or $Q_n$. More precisely:
\begin{theorem}[{\cite{DP}[Theorem~B]}]\label{thm:DP}
Let $X$ be a smooth complex projective variety of dimension $n$ and $\mathcal{E}$ an ample vector bundle
of rank $r>1$ on $X$. If $H^0\left(X,\ T_X^{\otimes r}\otimes \det(\mathcal{E})^{-1}\right)\neq 0$, then $(X,\det\mathcal{E})\cong (\mathbb P^n, \mathcal{O}_{\mathbb{P}^n}(l))$ with $r\leq l\leq r(n+1)/n$ or $(X, \mathcal{E})\cong (Q_n,\mathcal{O}_{Q_n}(1)^{\oplus r})$ and $r=2i+nj$ with $i,j\gg 0$.
\end{theorem}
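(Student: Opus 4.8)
The plan is to translate the hypothesis into the existence of an ample line bundle inside $T_X^{\otimes r}$, prove $X$ is rationally connected, use minimal rational curves to force $X\cong\mathbb{P}^n$ or $X\cong Q_n$, and finally read off $\det\mathcal{E}$ (and, over the quadric, $\mathcal{E}$) from slope-semistability of the tangent bundle together with the rank constraint. A nonzero section of $T_X^{\otimes r}\otimes\det(\mathcal{E})^{-1}$ is a nonzero, hence injective, sheaf map $\phi\colon L\to T_X^{\otimes r}$ with $L:=\det\mathcal{E}$ ample; moreover $L\cdot\ell\geq r$ for every rational curve $\ell\subseteq X$, since $\mathcal{E}|_{\mathbb{P}^1}\cong\bigoplus\mathcal{O}(b_i)$ with all $b_i\geq 1$. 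I would induct on $n=\dim X$ in the stronger form: if $L$ is ample, $H^0(T_X^{\otimes m}\otimes L^{-1})\neq 0$, and $L\cdot\ell\geq m$ for every rational curve, then $X\cong\mathbb{P}^n$ or $X\cong Q_n$ with the asserted degrees. Decomposing $T_X^{\otimes r}\cong\bigoplus_{\lambda\vdash r}(\mathbb{S}^{\lambda}T_X)^{\oplus f_{\lambda}}$, the section gives $L\hookrightarrow\mathbb{S}^{\lambda}T_X$ for some $\lambda$; when $\lambda=(1^{r})$ (which forces $r\leq n$) this is exactly $\bigwedge^{r}\mathcal{E}\subseteq\bigwedge^{r}T_X$, so Theorem~\ref{Theorem:Main} applies with $p=r$, but for a general $\lambda$ Schur's lemma kills every $\GL$-equivariant map from $\mathbb{S}^{\lambda}T_X$ to an exterior or symmetric power of $T_X$, so no reduction to a cited theorem is available and one must argue geometrically.

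First, $X$ is rationally connected. If it were not uniruled, then $\Omega^{1}_{X}$, hence $(\Omega^{1}_{X})^{\otimes r}$ and all its Schur functors, would be generically semipositive by Miyaoka's theorem, contradicting the fact that $\phi$ dualizes to a surjection, up to torsion, of $(\Omega^{1}_{X})^{\otimes r}$ onto the anti-ample $L^{-1}$. So $X$ is uniruled; for its maximal rationally connected fibration $\pi\colon X\dashrightarrow Z$, the relative tangent sequence gives $T_X^{\otimes r}$ a filtration with graded pieces $T_{X/Z}^{\otimes k}\otimes\pi^{*}T_Z^{\otimes(r-k)}$, and restricting $\phi$ to a general fibre $F$ (where $\pi^{*}T_Z$ is trivial) yields a nonzero section of $T_F^{\otimes k}\otimes L|_F^{-1}$ for some $k$ with $1\leq k\leq r$, the case $k=0$ being excluded by ampleness of $L|_F$. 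Since $L|_F\cdot\ell'\geq r\geq k$ for rational curves $\ell'\subseteq F$, and $\dim F<n$ if $\dim Z\geq 1$, induction forces $F\cong\mathbb{P}^{m}$ or $Q_{m}$; a positivity estimate for the base pieces $\pi^{*}T_Z^{\otimes(r-k)}$ --- negative along fibres and, as $Z$ is not uniruled, non-positive globally --- then contradicts $\dim Z\geq 1$, so $X$ is rationally connected.

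The technical heart is to conclude $X\cong\mathbb{P}^n$ or $X\cong Q_n$. Choose a minimal dominating family of rational curves with general member $\ell$, so that $T_X|_{\ell}\cong\mathcal{O}(2)\oplus\mathcal{O}(1)^{\oplus d}\oplus\mathcal{O}^{\oplus(n-1-d)}$ with $d=\dim\mathcal{C}_x$ and $-K_X\cdot\ell=d+2$. Restricting $\phi$ shows $L|_{\ell}$ embeds into a line-bundle summand of $\mathbb{S}^{\lambda}(T_X|_{\ell})$ of degree $\geq L\cdot\ell\geq r$, but a single restriction only gives $r\leq L\cdot\ell\leq 2r$, which is far from enough. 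The decisive input must be the whole family: the global section $\phi$ is constrained by the tautological positivity of $T_X$ along every minimal curve, and an analysis --- in the spirit of the variety of minimal rational tangents --- of how $\phi$ interacts with the positive part of $T_X$ and with the tangent map of $\mathcal{C}_x\subseteq\mathbb{P}(T_xX)$ should force $d$ to be maximal, i.e. the length of the family is at least $n$. I expect this step --- extracting a sharp lower bound on the length from one global section --- to be the main obstacle. Granting it, the numerical characterization of $\mathbb{P}^n$ by rational curves of length $\geq n+1$ (Cho--Miyaoka--Shepherd-Barron, refined by Kebekus) and of hyperquadrics by length $n$ (Miyaoka) gives $X\cong\mathbb{P}^n$ or $X\cong Q_n$.

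Finally, over $X\cong\mathbb{P}^n$: since $T_{\mathbb{P}^n}$ is slope-stable, $T_{\mathbb{P}^n}^{\otimes r}$ is semistable of slope $r(n+1)/n$, so a line subsheaf $\mathcal{O}(l)\hookrightarrow T_{\mathbb{P}^n}^{\otimes r}$ satisfies $l\leq r(n+1)/n$, while $l\geq r$ by ampleness of $\mathcal{E}$, and every such $l$ occurs (for $l=r$, tensor $r$ copies of a nonzero map $\mathcal{O}(1)\to T_{\mathbb{P}^n}$). Over $X\cong Q_n$: the same reasoning with $\mu(T_{Q_n}^{\otimes r})=r$ (stability of $T_{Q_n}$ for $n\geq 3$, and the split description for $Q_2$) forces $\det\mathcal{E}\cong\mathcal{O}_{Q_n}(r)$; decomposing $T_{Q_n}^{\otimes r}$ into irreducibles identifies its sub-line-bundles of maximal degree $r$ as tensor products of copies of the natural embedding $\mathcal{O}(2)\hookrightarrow\Sym^{2}T_{Q_n}$ coming from the conformal structure and of $\det T_{Q_n}=\bigwedge^{n}T_{Q_n}=\mathcal{O}(n)$, whence $r=2i+nj$; and an ample bundle of rank $r$ on $Q_n$ with determinant $\mathcal{O}_{Q_n}(r)$ restricts to $\mathcal{O}(1)^{\oplus r}$ on every line, hence is uniform and isomorphic to $\mathcal{O}_{Q_n}(1)^{\oplus r}$.
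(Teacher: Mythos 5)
This statement is not proved in the paper at all: it is Druel--Paris's Theorem~B, quoted from \cite{DP} and used as a black box, so the only fair comparison is with their original argument, which is a substantial proof building on the techniques of \cite{ADK08} (rationally connected quotients, minimal rational curves, and a delicate analysis of the section along them). Your proposal does not replace that: at the decisive moment --- extracting from the single section of $T_X^{\otimes r}\otimes\det(\mathcal{E})^{-1}$ a lower bound on the length of a minimal family strong enough to invoke Cho--Miyaoka--Shepherd-Barron/Miyaoka and conclude $X\cong\mathbb{P}^n$ or $Q_n$ --- you write ``I expect this step to be the main obstacle. Granting it, \dots''. That step \emph{is} the theorem; a single restriction to a minimal curve only gives $r\le L\cdot\ell\le 2r$ (as you note), and no mechanism is offered that upgrades this to $-K_X\cdot\ell\ge n$. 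So the proposal is an outline of a plausible strategy with the core left open, not a proof.

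Two further points. First, the aside that the case $\lambda=(1^r)$ ``is exactly $\bigwedge^r\mathcal{E}\subseteq\bigwedge^rT_X$, so Theorem~\ref{Theorem:Main} applies'' is circular in the logical structure of this paper: Theorem~\ref{Theorem:Main} is deduced \emph{from} Theorem~\ref{thm:DP}, so it cannot be used to establish it. Second, several of the remaining steps are asserted rather than argued: in the rational-connectedness reduction, the claimed contradiction with $\dim Z\ge 1$ from ``a positivity estimate for the base pieces'' is exactly the technical content of the corresponding lemmas in \cite{ADK08}/\cite{DP} and needs care because the MRC fibration is only a rational map; and over $Q_n$ the identification of all degree-$r$ line subsheaves of $T_{Q_n}^{\otimes r}$ as monomials in the conformal section of $\Sym^2T_{Q_n}(-2)$ and the volume section of $\bigwedge^nT_{Q_n}(-n)$ (whence $r=2i+nj$) is a nontrivial representation-theoretic computation that you state without proof. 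The concluding slope and splitting-type arguments on $\mathbb{P}^n$ and $Q_n$ are fine in spirit (modulo the normalization $\mu(T_{Q_n})=2$, which still yields $l=r$), but they only address the easy endgame.
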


Recent results have focused on weakening the positivity assumption from ample to nef or strictly nef, at the cost of rigidity of the structure. Notably, Li--Ou--Yang \cite{LOY19} obtained $X\cong \mathbb{P}^n$ or $Q_n$, assuming $T_X$ is strictly nef, or $n\geq 3$ and $\bigwedge^2T_X$ is strictly nef. Recently, Shao \cite{Shao24} has also conjectured a characterization of projective space and hyperquadrics using cohomology of twists of symmetric powers of tangent bundle and has proven it for varieties with Picard number $1$.

\subsection*{Acknowledgment}
The author would like to thank Giovanni Inchiostro for initial discussions that led the author to the paper \cite{ADK08}. He would also like to thank Sándor Kovács for suggesting the problem and reading an earlier draft of the paper.

\section{Proof of Theorem~\ref{Theorem:Main} and Theorem~\ref{Theorem:Main4}}

\subsection{Characterizing the variety}
In this section, we prove weaker versions of Theorem~\ref{Theorem:Main} and Theorem~\ref{Theorem:Main4}, namely, we obtain a weaker characterization of $\mathcal{E}$, when $X\cong\mathbb{P}^n$. Let $X$ be a smooth complex projective variety of dimension $n$ and let $\mathcal E$ be ample vector bundle on $X$ of rank $r$. For any positive integer integer $1\leq p\leq r$. For the remainder of the paper, set
\[
f := \rank\left(\bigwedge^p \mathcal E\right)=\binom{r}{p},
\
a := \binom{r-1}{p-1}; \qquad f':=\rank\left(\Sym^p\mathcal{E}\right)=\binom{p+r-1}{p}, \ a':= \binom{p+r-1}{r}.
\]
We have the binomial identities 
\begin{equation}\label{Eqn:0}
    p\binom{r}{p} = r\binom{r-1}{p-1}, \text{ i.e.},\ pf = ra;\qquad  p\binom{p+r-1}{p}=r\binom{p+r-1}{r}, \text{ i.e.}, \ pf'=ra'.
\end{equation}

We first prove a linear algebraic fact.

\begin{lemma}\label{Lemma:detpower}
    For any vector bundle $\mathcal{E}$ of rank $r$ and $p\leq r$, we have $\det\left(\bigwedge^p\mathcal E\right)\ \cong\ \det(\mathcal E)^{\otimes a}$, where $a=\binom{r-1}{p-1}$ and $\det\left(\Sym^p\mathcal E\right)\ \cong\ \det(\mathcal E)^{\otimes a'}$, where $a'=\binom{p+r-1}{r}$.
\end{lemma}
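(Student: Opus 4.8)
The plan is to reduce, via the splitting principle, to the case in which $\mathcal E$ admits a complete flag of sub-bundles — where the statement becomes a count of binomial coefficients — and then to descend the resulting isomorphism of line bundles back to $X$ using that the relevant pullback is injective on Picard groups.

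First I would record the standard fact that the determinant is multiplicative in short exact sequences of vector bundles: if $0\to\mathcal A\to\mathcal B\to\mathcal C\to 0$ is exact, then $\det\mathcal B\cong\det\mathcal A\otimes\det\mathcal C$; iterating, any vector bundle admitting a filtration with line-bundle quotients $\mathcal L_1,\dots,\mathcal L_m$ has determinant $\mathcal L_1\otimes\cdots\otimes\mathcal L_m$. By the splitting principle, there is a morphism $\pi\colon Y\to X$ — the complete flag bundle of $\mathcal E$, an iterated projective bundle — such that $\pi^*\mathcal E$ admits a complete filtration with line-bundle quotients $\mathcal L_1,\dots,\mathcal L_r$ and such that $\pi^*\colon\operatorname{Pic}(X)\to\operatorname{Pic}(Y)$ is (split) injective, since the Picard group of a projective bundle $\mathbb P(\mathcal F)$ over a smooth variety is that of the base together with a free summand generated by the relative $\mathcal O(1)$. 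Both $\bigwedge^p$ and $\Sym^p$ send a filtered vector bundle to a filtered vector bundle: $\bigwedge^p(\pi^*\mathcal E)$ acquires a filtration with successive quotients the line bundles $\mathcal L_{i_1}\otimes\cdots\otimes\mathcal L_{i_p}$ indexed by the subsets $1\le i_1<\cdots<i_p\le r$, and $\Sym^p(\pi^*\mathcal E)$ one with successive quotients $\mathcal L_{i_1}\otimes\cdots\otimes\mathcal L_{i_p}$ indexed by the multisets $1\le i_1\le\cdots\le i_p\le r$.

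Since pullback commutes with $\det$, $\bigwedge^p$, and $\Sym^p$, combining these observations gives
\begin{align*}
\pi^*\det\Bigl(\bigwedge^p\mathcal E\Bigr)
&\;\cong\;\det\Bigl(\bigwedge^p\pi^*\mathcal E\Bigr)\;\cong\;\bigotimes_{1\le i_1<\cdots<i_p\le r}\mathcal L_{i_1}\otimes\cdots\otimes\mathcal L_{i_p}\\
&\;\cong\;\bigotimes_{j=1}^{r}\mathcal L_j^{\otimes N_j},
\end{align*}
where $N_j$ is the number of $p$-element subsets of $\{1,\dots,r\}$ containing $j$, hence $N_j=\binom{r-1}{p-1}=a$ for every $j$; therefore $\pi^*\det(\bigwedge^p\mathcal E)\cong(\det\pi^*\mathcal E)^{\otimes a}\cong\pi^*\bigl(\det(\mathcal E)^{\otimes a}\bigr)$, and the injectivity of $\pi^*$ yields $\det(\bigwedge^p\mathcal E)\cong\det(\mathcal E)^{\otimes a}$. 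The symmetric-power case runs identically, the only change being that $N_j$ is now the total multiplicity of the index $j$ over all size-$p$ multisets from $\{1,\dots,r\}$; by the symmetry in the indices $1,\dots,r$ this is independent of $j$, so $rN_j=p\binom{p+r-1}{p}$ and hence $N_j=\binom{p+r-1}{r}=a'$ by the identity $pf'=ra'$ from \eqref{Eqn:0}.

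The only step that is not purely formal is the passage from an equality of first Chern classes to an actual isomorphism of line bundles; this is exactly why the argument is routed through the splitting principle and the injectivity of $\pi^*$ on Picard groups, rather than through a bare Chern-class computation, which sees only $c_1$ and could miss a factor in $\operatorname{Pic}^0(X)$. Everything else is bookkeeping with filtrations and binomial coefficients, so I do not expect a genuine obstacle.
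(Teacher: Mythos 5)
Your proof is correct, but it follows a genuinely different route from the paper. The paper's argument is representation-theoretic and purely fiberwise: $\det\bigl(\bigwedge^p(-)\bigr)$ and $\det\bigl(\Sym^p(-)\bigr)$ are one-dimensional (polynomial) representations of $\GL_r$, hence powers $\det(-)^{\otimes b}$ of the determinant character, and the exponent is pinned down by the central character --- a scalar $\lambda$ acts on $\bigwedge^p\mathcal{E}$ and $\Sym^p\mathcal{E}$ by $\lambda^p$, hence on the determinants by $\lambda^{pf}=\lambda^{ra}$ and $\lambda^{pf'}=\lambda^{ra'}$ via \eqref{Eqn:0}; the paper also remarks that one can instead apply the Sylvester--Franke identity to transition functions. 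You instead pull back to the complete flag bundle, use multiplicativity of $\det$ along the induced filtrations of $\bigwedge^p$ and $\Sym^p$, count the multiplicity of each line-bundle quotient ($\binom{r-1}{p-1}$ for subsets; the multiset count reduced by symmetry and \eqref{Eqn:0} for $\Sym^p$), and descend using injectivity of $\pi^*$ on Picard groups. Both are complete: the paper's proof is shorter and needs no geometric input about $X$ beyond transition functions (though it implicitly uses that every one-dimensional algebraic representation of $\GL_r$ is a power of $\det$), while yours trades that classification for the splitting principle plus the projection-formula argument for $\operatorname{Pic}$-injectivity, and you correctly flag why the descent must go through $\operatorname{Pic}$ rather than a bare $c_1$ computation; in the paper's setting ($X$ smooth complex projective) this extra input is harmless.
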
 
\begin{proof}
Note that $\det(\bigwedge^p(-))$ and $\det(\Sym^p(-))$ are $1$-dimensional representations of the general linear group and hence $\det(\bigwedge^p(-))=\det(-)^{\otimes b}$ and $\det(\Sym^p(-))=\det(-)^{\otimes b'}$ for some integers $b, b'$. Fiberwise, a scalar $\lambda$ acts on $\bigwedge^p\mathcal{E}$ and $\Sym^p\mathcal{E}$ as multiplication by $\lambda^p$. Therefore, fiberwise $\lambda$ acts on $\det\bigwedge^p\mathcal{E}$ by multiplication by $\lambda^{pf}=\lambda^{ra}$ by \eqref{Eqn:0}, which implies $b=a$. Similarly, fiberwise $\lambda$ acts on $\det\Sym^p\mathcal{E}$ by multiplication by $\lambda^{pf'}=\lambda^{ra'}$ by \eqref{Eqn:0}, which implies $b'=a'$.

We note that alternatively, one can take a trivializing cover for $\mathcal{E}$ on $X$ and apply the classical Sylvester-Franke theorem in multilinear algebra \cite{Price47}[\S6] to $\det(\bigwedge^p\mathcal{E})$ and $\det(\Sym^p\mathcal{E})$ at the level of transition functions to obtain the desired identity.
\end{proof}

\begin{lemma}\label{lem:wedge-to-tensor-section}
If there is a subbundle inclusion $\bigwedge^p\mathcal E\subseteq \bigwedge^pT_X$ for any positive integer $p\leq r$, then
\begin{equation}\label{Equation:exterior}
H^0\left(X,\ T_X^{\otimes pf}\otimes \det(\mathcal E)^{-a}\right)\neq 0.
\end{equation}
Similarly, if there is a subbundle inclusion $\Sym^p\mathcal{E}\subseteq\Sym^pT_X$ for any positive integer $p$, then
\begin{equation}\label{Equation:symmetric}
H^0\left(X,\ T_X^{\otimes pf'}\otimes \det(\mathcal E)^{-a'}\right)\neq 0.
\end{equation}
\end{lemma}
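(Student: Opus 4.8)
The plan is to pass to top exterior powers so that Lemma~\ref{Lemma:detpower} applies directly, and then to exploit the fact that over $\mathbb{C}$ the functors $\bigwedge^m(-)$ and $\Sym^m(-)$ are natural direct summands of $(-)^{\otimes m}$.

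First I would record the elementary local fact that a subbundle inclusion $F\subseteq G$ of vector bundles induces, for every $k$, a subbundle inclusion $\bigwedge^k F\subseteq\bigwedge^k G$: locally $G\cong F\oplus F'$, whence $\bigwedge^k G\cong\bigoplus_{i+j=k}\bigwedge^i F\otimes\bigwedge^j F'$ and $\bigwedge^k F$ is the summand with $(i,j)=(k,0)$. Applying this with $k=f=\rank\left(\bigwedge^p\mathcal E\right)$ to the hypothesized inclusion $\bigwedge^p\mathcal E\subseteq\bigwedge^p T_X$ gives a line subbundle inclusion $\det\left(\bigwedge^p\mathcal E\right)\subseteq\bigwedge^f\left(\bigwedge^p T_X\right)$, and by Lemma~\ref{Lemma:detpower} the source is $\det(\mathcal E)^{\otimes a}$. (Note $f=\binom{r}{p}\leq\binom{n}{p}=\rank\left(\bigwedge^p T_X\right)$ because the inclusion exists, so $\bigwedge^f\left(\bigwedge^p T_X\right)\neq 0$.)

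Next I would use that over $\mathbb{C}$ the alternation operator $\tfrac{1}{m!}\sum_{\sigma\in S_m}\operatorname{sgn}(\sigma)\,\sigma$ is an idempotent endomorphism of $V^{\otimes m}$ whose image is isomorphic, naturally in $V$, to $\bigwedge^m V$; hence $\bigwedge^m V$ is functorially a subbundle of $V^{\otimes m}$. Applying this twice yields subbundle inclusions
\[
\bigwedge^f\left(\bigwedge^p T_X\right)\ \hookrightarrow\ \left(\bigwedge^p T_X\right)^{\otimes f}\ \hookrightarrow\ \left(T_X^{\otimes p}\right)^{\otimes f}\ =\ T_X^{\otimes pf}.
\]
Composing with the inclusion of the previous paragraph produces $\det(\mathcal E)^{\otimes a}\hookrightarrow T_X^{\otimes pf}$, i.e. a nowhere-zero, in particular nonzero, global section of $T_X^{\otimes pf}\otimes\det(\mathcal E)^{-a}$, which is \eqref{Equation:exterior}.

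For \eqref{Equation:symmetric} the argument is verbatim the same with $\bigwedge^p$ replaced by $\Sym^p$: from $\Sym^p\mathcal E\subseteq\Sym^p T_X$ take $\bigwedge^{f'}$ with $f'=\rank(\Sym^p\mathcal E)$ to get $\det(\Sym^p\mathcal E)\subseteq\bigwedge^{f'}(\Sym^p T_X)$, identify the source with $\det(\mathcal E)^{\otimes a'}$ via Lemma~\ref{Lemma:detpower}, and then embed $\bigwedge^{f'}(\Sym^p T_X)\hookrightarrow(\Sym^p T_X)^{\otimes f'}\hookrightarrow T_X^{\otimes pf'}$ using, in characteristic zero, the alternation operator once and the symmetrization operator $\tfrac{1}{p!}\sum_{\sigma\in S_p}\sigma$ once. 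There is no serious obstacle here; the only points needing care are that each of these operations preserves the property of being a subbundle (a local statement, handled as above) and that Lemma~\ref{Lemma:detpower} pins down exactly the power of $\det(\mathcal E)$ that appears — which is precisely why that lemma was isolated beforehand.
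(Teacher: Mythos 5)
Your proposal is correct and follows essentially the same route as the paper: take the $f$-th (resp.\ $f'$-th) exterior power of the inclusion to land in $\bigwedge^f\bigl(\bigwedge^p T_X\bigr)$ (resp.\ $\bigwedge^{f'}(\Sym^p T_X)$), embed that into $T_X^{\otimes pf}$ (resp.\ $T_X^{\otimes pf'}$) via the characteristic-zero (anti)symmetrization maps, and identify $\det\bigl(\bigwedge^p\mathcal E\bigr)$ (resp.\ $\det(\Sym^p\mathcal E)$) with the appropriate power of $\det\mathcal E$ using Lemma~\ref{Lemma:detpower}. Your phrasing via idempotent projectors and local splittings (giving a nowhere-vanishing section) is a cosmetic strengthening of the paper's argument, which only needs the composite map to be nonzero.
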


\begin{proof}
Let $\iota:\bigwedge^p\mathcal E\hookrightarrow \bigwedge^pT_X$ be the inclusion. Taking the induced map on the $f^{th}$ exterior powers, gives a nonzero morphism
\begin{equation}\label{Eqn:1}
\wedge^f\iota:\det\left(\bigwedge^p\mathcal E\right)\ \longrightarrow\ \bigwedge^f\left(\bigwedge^p T_X\right).
\end{equation}
For any vector bundle $\mathcal{V}$ of rank $r$, there are canonical antisymmetrization maps $\alpha_k:\bigwedge^k\mathcal{V}\rightarrow\mathcal{V}^{\otimes k}$ for $1\leq k\leq r$, given on sections by $\alpha_k(v_1\wedge v_2\wedge\dots\wedge v_k)=\sum_{\sigma\in\Sigma_k}\operatorname{sign}(\sigma) v_{\sigma(1)}\otimes v_{\sigma(2)}\otimes\dots\otimes v_{\sigma(k)}$, where $\Sigma_k$ is the symmetric group on $k$ letters. In characteristic $0$ (or as long as characteristic of the base field is coprime to $k!$), these antisymmetrization maps for vector bundles are injective. The $f^{th}$ antisymmetrization map for $\mathcal{V}=\bigwedge^pT_X$ gives an injective map 
\begin{equation}\label{Eqn:2}
\alpha_f: \bigwedge^{f}\left(\bigwedge^p T_X\right)\hookrightarrow \left(\bigwedge^pT_X\right)^{\otimes f},
\end{equation}
and the $p^{th}$ antisymmetrization map for $\mathcal{V}=T_X$ gives an injective map $\beta_p:\bigwedge^pT_X\hookrightarrow T_X^{\otimes p}$. Taking the induced map on the $f^{th}$ tensor powers we get an injective map
\begin{equation}\label{Eqn:3}
\beta_p^{\otimes f}:\left(\bigwedge^pT_X\right)^{\otimes f}
\hookrightarrow \left(T_X^{\otimes p}\right)^{\otimes f}=T_X^{\otimes pf}.
\end{equation}
Composing all the maps \eqref{Eqn:1}, \eqref{Eqn:2} and \eqref{Eqn:3} yields a non-zero (in fact injective) morphism:
\begin{equation}\label{Eqn:4}
    \det\left(\bigwedge^p\mathcal E\right)\xrightarrow{\wedge^f\iota}\bigwedge^{f}\left(\bigwedge^p T_X\right)\xrightarrow{\alpha_f} \left(\bigwedge^pT_X\right)^{\otimes f}
\xrightarrow{\beta_p^{\otimes f}} T_X^{\otimes pf}.
\end{equation}
Thus, we obtain a nonzero section in $H^0\bigl(X,\ T_X^{\otimes pf}\otimes \det(\bigwedge^p\mathcal E)^{-1}\bigr)$ and proves \eqref{Equation:exterior} by Lemma~\ref{Lemma:detpower}.\\

Similarly, if $\iota':\Sym^p\mathcal{E}\hookrightarrow\Sym^pT_X$ is the inclusion, taking the induced map on $(f')^{th}$ exterior powers gives the injection $\wedge^{f'}\iota':\det(\Sym^p\mathcal{E})\hookrightarrow \bigwedge^{f'}(\Sym^pT_X)$. Using the antisymmetrization map, we have an injective map $\alpha'_{f'}:\bigwedge^{f'}(\Sym^pT_X)\hookrightarrow(\Sym^pT_X)^{\otimes f'}$. Analogous to the antisymmetrization map, for any vector bundle $\mathcal{V}$ of rank $r$, there are canonical symmetrization maps $\gamma_k:\Sym^k\mathcal{V}\rightarrow\mathcal{V}^{\otimes k}$ for any $k\geq 1$, given on sections by $\gamma_k(v_1\cdot v_2\cdots v_k)=\sum_{\sigma\in\Sigma_k}v_{\sigma(1)}\otimes v_{\sigma(2)}\otimes\cdots v_{\sigma(k)}$. When $k!$ is invertible in the base field, the symmetrization map is injective. The $p^{th}$ symmetrization map for $\mathcal{V}=T_X$ therefore gives an injective map $\gamma_{p}: \Sym^pT_X\rightarrow T_X^{\otimes p}$, which therefore induces an injective map $\gamma_p^{\otimes f'}:(\Sym^pT_x)^{\otimes f'}\hookrightarrow T_X^{\otimes pf'}$. Thus, the composition
\begin{equation}\label{Eqn:4sym}
    \det\left(\Sym^p\mathcal E\right)\xrightarrow{\wedge^{f'}\iota'}\bigwedge^{f'}\left(\Sym^p T_X\right)\xrightarrow{\alpha'_{f'}} \left(\Sym^pT_X\right)^{\otimes f'}
\xrightarrow{\gamma_p^{\otimes f'}} T_X^{\otimes pf'}
\end{equation}
yields a non-zero section in $H^0\bigl(X,\ T_X^{\otimes pf'}\otimes \det(\Sym^p\mathcal E)^{-1}\bigr)$ and proves \eqref{Equation:symmetric} by Lemma~\ref{Lemma:detpower}.
\end{proof}

\begin{theorem}\label{Theorem:Main2}
Assume $\bigwedge^p\mathcal E\subseteq \bigwedge^pT_X$ for an ample rank $r$ vector bundle  $\mathcal E$ on $X$, and some positive integer $p\leq r$. Then $(X, \det\mathcal{E})\cong (\mathbb P^n, \mathcal{O}_{\mathbb{P}^n}(l))$ with $r\leq l\leq r(n+1)/n$, or $p=n$ and $(X, \mathcal{E})\cong (Q_p, \mathcal{O}_{Q_p}(1)^{\oplus r})$ where $Q_p\subseteq \mathbb{P}^{p+1}$ is a smooth hyperquadric. 
\end{theorem}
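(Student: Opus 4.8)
The plan is to deduce Theorem~\ref{Theorem:Main2} from the Druel--Paris characterization (Theorem~\ref{thm:DP}) after a short reduction. By Lemma~\ref{lem:wedge-to-tensor-section}, the inclusion $\bigwedge^p\mathcal E\subseteq\bigwedge^pT_X$ yields a nonzero section in $H^0\bigl(X,\ T_X^{\otimes pf}\otimes\det(\mathcal E)^{-a}\bigr)$, cf.\ \eqref{Equation:exterior}. To match the hypothesis of Theorem~\ref{thm:DP}, I would absorb the power $\det(\mathcal E)^{a}$ into the determinant of an ample bundle of rank $pf$: take $\mathcal F:=\mathcal E^{\oplus a}$, which is ample of rank $ra=pf$ by \eqref{Eqn:0} and has $\det\mathcal F\cong\det(\mathcal E)^{\otimes a}$. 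Then $H^0\bigl(X,\ T_X^{\otimes \rank\mathcal F}\otimes\det(\mathcal F)^{-1}\bigr)\neq 0$, and as soon as $r\geq 2$ we have $\rank\mathcal F=pf\geq 2$, so Theorem~\ref{thm:DP} applies to $\mathcal F$ and leaves two possibilities.

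If $(X,\det\mathcal F)\cong(\mathbb P^n,\mathcal O_{\mathbb P^n}(L))$ with $pf\leq L\leq pf(n+1)/n$, then since $\operatorname{Pic}\mathbb P^n\cong\mathbb Z$ I can write $\det\mathcal E\cong\mathcal O_{\mathbb P^n}(l)$, and $\det(\mathcal E)^{\otimes a}\cong\mathcal O_{\mathbb P^n}(al)$ forces $al=L$; dividing the inequalities by $a$ and using $pf=ra$ gives $r\leq l\leq r(n+1)/n$, the projective-space alternative. If instead $(X,\mathcal F)\cong(Q_n,\mathcal O_{Q_n}(1)^{\oplus pf})$, then $\mathcal E^{\oplus a}\cong\mathcal O_{Q_n}(1)^{\oplus pf}$; by the Krull--Schmidt property for coherent sheaves on a projective variety, every indecomposable summand of $\mathcal E$ is then $\mathcal O_{Q_n}(1)$, so $\mathcal E\cong\mathcal O_{Q_n}(1)^{\oplus r}$. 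It remains to force $p=n$. For this I would observe that the original hypothesis now reads $\mathcal O_{Q_n}(p)^{\oplus\binom rp}\cong\bigwedge^p\mathcal E\subseteq\bigwedge^pT_{Q_n}$, so $H^0\bigl(Q_n,\ \bigwedge^pT_{Q_n}\otimes\mathcal O_{Q_n}(-p)\bigr)\neq 0$, and apply Theorem~\ref{Thm:ADK08} on $X=Q_n$ with $\mathcal L=\mathcal O_{Q_n}(1)$: its conclusion forces $p=n$, because its other alternative $(Q_n,\mathcal O_{Q_n}(1))\cong(\mathbb P^n,\mathcal O_{\mathbb P^n}(1))$ never holds. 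This gives the quadric alternative $(X,\mathcal E)\cong(Q_p,\mathcal O_{Q_p}(1)^{\oplus r})$. The excluded case $r=1$ is immediate: then $p=1$ and $\mathcal E$ is an ample line bundle contained in $T_X$, and Theorem~\ref{Thm:WahDru} gives either $(X,\mathcal E)\cong(\mathbb P^n,\mathcal O_{\mathbb P^n}(1))$, with $l=1=r$ compatible with the stated bound, or $(X,\mathcal E)\cong(\mathbb P^1,\mathcal O_{\mathbb P^1}(2))\cong(Q_1,\mathcal O_{Q_1}(1))$ with $p=1=n$.

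I do not expect a serious obstacle, as the whole argument is a bookkeeping reduction to already-cited theorems; the only genuinely non-formal point is that Theorem~\ref{thm:DP} by itself does not register the exterior-power hypothesis, so one must re-enter it on the quadric via Theorem~\ref{Thm:ADK08} to conclude $p=n$. The steps demanding care are the rank-and-degree bookkeeping through \eqref{Eqn:0} and the use of Krull--Schmidt to split a single copy of $\mathcal E$ off from $\mathcal E^{\oplus a}$.
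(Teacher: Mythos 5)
Your proposal is correct, and its skeleton is the same as the paper's: pass from the inclusion to a section of $T_X^{\otimes pf}\otimes\det(\mathcal E)^{-a}$ via Lemma~\ref{lem:wedge-to-tensor-section}, package $\mathcal F=\mathcal E^{\oplus a}$ of rank $ra=pf$ using \eqref{Eqn:0}, and invoke Theorem~\ref{thm:DP}; the degree bookkeeping in the $\mathbb P^n$ branch is identical. Where you genuinely diverge is the last step on the quadric: the paper re-enters the exterior-power hypothesis by rewriting $\bigwedge^pT_{Q_n}\cong\Omega^{n-p}_{Q_n}(n)$ and quoting Snow's vanishing theorem (Theorem~\ref{Theorem:Snow}) to kill $H^0(Q_n,\Omega^{n-p}_{Q_n}(n-p))$ unless $p=n$, whereas you feed the nonvanishing of $H^0\bigl(Q_n,\bigwedge^pT_{Q_n}\otimes\mathcal O_{Q_n}(-p)\bigr)$ directly into Theorem~\ref{Thm:ADK08} with $\mathcal L=\mathcal O_{Q_n}(1)$ and rule out its projective-space alternative as a polarized pair (note $(Q_1,\mathcal O_{Q_1}(1))\cong(\mathbb P^1,\mathcal O_{\mathbb P^1}(2))$, so the exclusion indeed holds in every dimension). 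Both routes work; Snow's theorem gives an explicit, self-contained cohomological reason, while your use of Araujo--Druel--Kov\'acs is shorter but leans on a heavier black box. Two further points in your write-up are actually more careful than the paper: you justify $\mathcal E\cong\mathcal O_{Q_n}(1)^{\oplus r}$ from $\mathcal E^{\oplus a}\cong\mathcal O_{Q_n}(1)^{\oplus ra}$ by Krull--Schmidt (the paper asserts this in \eqref{Eqn:5} without comment), and you notice that Theorem~\ref{thm:DP} requires rank $>1$, so the case $r=1$ (hence $p=1$, $\rank\mathcal F=1$) needs the separate appeal to Theorem~\ref{Thm:WahDru}, a hypothesis the paper's proof silently skips over.
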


\begin{proof}
Consider the ample vector bundle $\mathcal F := \mathcal E^{\oplus a}$ on $X$. Then $\rank(\mathcal F)=ra$ and $\det(\mathcal F)=\det(\mathcal E)^{\otimes a}$. But we have $pf=ra=\rank(\mathcal{F})$ by \eqref{Eqn:0}, and Lemma~\ref{lem:wedge-to-tensor-section} gives us
\[
H^0\left(X,\ T_X^{\otimes pf}\otimes \det(\mathcal F)^{-1}\right)\neq 0.
\]
Now by Druel-Paris Theorem~\ref{thm:DP}, we have two possibilities:
\begin{enumerate}[leftmargin=*]
    \item  $(X, \det\mathcal{F})\cong (\mathbb{P}^n, \mathcal{O}_{\mathbb{P}^n}(l))$ with $\rank\mathcal{F}\leq l\leq \rank\mathcal{F}(n+1)/n $. But $\det\mathcal{F}=\det \mathcal{E}^{\oplus a}=(\det\mathcal{E})^{\otimes a}\cong \mathcal{O}_{\mathbb{P}^n}(l)$, with $ra\leq l\leq ra(n+1)/n$. Therefore, $\det\mathcal{E}\cong \mathcal{O}_{\mathbb{P}^n}(l)$ with $r\leq l\leq r(n+1)/n$ where $r=\rank\mathcal{E}$.
    \item $(X,\mathcal{F})\cong (Q_n, \mathcal{O}_{Q_n}(1)^{\oplus k})$ where $k=2i+nj$ with $i,j\gg 0$. But $k=\rank\mathcal{F}=ra$, where $r=\rank\mathcal{E}$. Thus, 
    \begin{equation}\label{Eqn:5}
    \mathcal{E}\cong\mathcal{O}_{Q_n}(1)^{\oplus r} \text{ and } \bigwedge^p\mathcal{E}\cong\bigwedge^p\mathcal{O}_{Q_n}(1)^{\oplus r}\cong\mathcal{O}_{Q_n}(p)^{\oplus \binom{r}{p}}.
    \end{equation} 
    Furthermore, by perfect pairing for vector bundles, we have \begin{equation}\label{Eqn:6}
    \bigwedge^pT_{Q_n}\cong (\bigwedge^{n-p}T_{Q_n})^\vee\otimes\bigwedge^nT_{Q_n}\cong\Omega^{n-p}_{Q_n}\otimes \omega^{-1}_{Q_n}\cong \Omega^{n-p}_{Q_n}\otimes\mathcal{O}_{Q_n}(n), 
    \end{equation}
    since $\omega_{Q_n}\cong\mathcal{O}_{Q_n}(-n)$ by the adjunction formula for $Q_n\subseteq \mathbb{P}^{n+1}$. Thus, the inclusion $\bigwedge^p\mathcal{E}\cong \mathcal{O}_{Q_n}(p)^{\oplus \binom{r}{p}}\subseteq\bigwedge^pT_{Q_n}$ gives a non-zero map $\mathcal{O}_{Q_n}(p)\rightarrow\bigwedge^pT_{Q_n}$ and hence by \eqref{Eqn:6}, a non-zero element of 
    \begin{equation}H^0(Q_n,\ \bigwedge^pT_{Q_n}\otimes\mathcal{O}_{Q_n}(-p))=H^0(Q_n,\ \Omega^{n-p}_{Q_n}(n-p)),
    \end{equation}
    implying that $H^0(Q^n,\ \Omega^{n-p}_{Q^n}(n-p))\neq 0$. We will now use the following result of Snow on the cohomology of twisted holomorphic forms on quadric hypersurfaces.
    \begin{theorem}[{\cite{Snow86}[\S4]}]\label{Theorem:Snow}
        Let $X$ be a non-singular quadric hypersurface of dimension $n$.
        \begin{enumerate}
            \item If $-n+q\leq k\leq q$, and $k\neq 0, \ -n+2q$, then $H^i(X, \Omega^q(k))=0$ for all $i$.
            \item $H^i(X, \Omega_X^q)\neq 0$ if and only if $i=q$.
            \item $H^i(X, \Omega^q(-n+2q))\neq 0$ if and only if $i=n-q$.
            \item If $k>q$, then $H^i(X, \Omega^q(k))\neq 0$ if and only if $i=0$.
            \item If $k<-n+q$, then $H^i(X, \Omega^q(k))\neq 0$ if and only if $i=n$.
        \end{enumerate}
    \end{theorem}
     By taking $k=q=n-p$ and $i=0$ in Theorem~\ref{Theorem:Snow}(a), we have $H^0(Q_n,\ \Omega^{n-p}_{Q_n}(n-p))=0$, unless $n-p=0$ or $n-p=-n+2(n-p)$ (equivalently, $p=0$). Since we found  $H^0(Q_n,\ \Omega^{n-p}_{Q_n}(n-p))\neq 0$, we must have $p=n$ or $p=0$. But $p>0$ by hypothesis, so we therefore have $p=n$.
\end{enumerate}
\end{proof}

\begin{theorem}\label{Theorem:Main2symmetric}
Assume $\Sym^p\mathcal E\subseteq \Sym^pT_X$ for an ample rank $r$ vector bundle  $\mathcal E$ on $X$, and some positive integer $p$. Then $(X, \det\mathcal{E})\cong (\mathbb P^n, \mathcal{O}_{\mathbb{P}^n}(l))$ with $r\leq l\leq r(n+1)/n$, or  $(X, \mathcal{E})\cong (Q_n, \mathcal{O}_{Q_n}(1)^{\oplus r})$ where $Q_n\subseteq \mathbb{P}^{n+1}$ is a smooth hyperquadric. 
\end{theorem}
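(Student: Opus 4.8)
The plan is to run the argument of Theorem~\ref{Theorem:Main2} almost verbatim, using the ``symmetric'' numerical data $f'=\binom{p+r-1}{p}$, $a'=\binom{p+r-1}{r}$ in place of $f,a$, the identity $pf'=ra'$ from \eqref{Eqn:0}, and the second half of Lemma~\ref{lem:wedge-to-tensor-section}. One simplification is that Snow's vanishing (Theorem~\ref{Theorem:Snow}) is \emph{not} needed here: in the conclusion of Theorem~\ref{Theorem:Main2symmetric} the quadric $Q_n$ is a genuine alternative and there is no relation between $p$ and $n$ to be extracted, so the quadric branch of Druel--Paris can simply be passed through.

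First I would dispose of the degenerate case $ra'=1$. This forces $r=1$, and then $a'=\binom{p}{1}=p$, so $p=1$ as well; in that case $\mathcal{E}$ is an ample line bundle contained in $T_X$, and Theorem~\ref{Thm:WahDru} gives $(X,\mathcal{E})\cong(\mathbb{P}^n,\mathcal{O}_{\mathbb{P}^n}(1))$ or $(\mathbb{P}^1,\mathcal{O}_{\mathbb{P}^1}(2))$, both of which lie in the $\mathbb{P}^n$ branch of the statement (with $l=1=r$, and with $n=1$, $l=2=r(n+1)/n$, respectively). So I would assume $ra'>1$ henceforth.

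Next I would set $\mathcal{F}:=\mathcal{E}^{\oplus a'}$. This is an ample vector bundle with $\rank\mathcal{F}=ra'=pf'$ and $\det\mathcal{F}\cong(\det\mathcal{E})^{\otimes a'}$, so the second conclusion of Lemma~\ref{lem:wedge-to-tensor-section} applied to $\Sym^p\mathcal{E}\subseteq\Sym^pT_X$ reads $H^0(X,\ T_X^{\otimes\rank\mathcal{F}}\otimes\det(\mathcal{F})^{-1})\neq 0$. This is precisely the hypothesis of the Druel--Paris Theorem~\ref{thm:DP} for $\mathcal{F}$ (legitimate since $\rank\mathcal{F}>1$), which leaves two cases. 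If $(X,\det\mathcal{F})\cong(\mathbb{P}^n,\mathcal{O}_{\mathbb{P}^n}(l))$ with $\rank\mathcal{F}\leq l\leq\rank\mathcal{F}(n+1)/n$, then writing $\det\mathcal{E}\cong\mathcal{O}_{\mathbb{P}^n}(m)$ (possible as $\operatorname{Pic}\mathbb{P}^n=\mathbb{Z}$) we get $l=a'm$, and dividing the bounds by $a'>0$ gives $r\leq m\leq r(n+1)/n$, which is the $\mathbb{P}^n$ alternative. If instead $(X,\mathcal{F})\cong(Q_n,\mathcal{O}_{Q_n}(1)^{\oplus k})$ with $k=\rank\mathcal{F}=ra'$, then $\mathcal{E}^{\oplus a'}\cong\mathcal{O}_{Q_n}(1)^{\oplus ra'}$, and by uniqueness of the decomposition of a vector bundle on a connected projective variety into indecomposables every indecomposable summand of $\mathcal{E}$ is $\mathcal{O}_{Q_n}(1)$; hence $\mathcal{E}\cong\mathcal{O}_{Q_n}(1)^{\oplus r}$ and $(X,\mathcal{E})\cong(Q_n,\mathcal{O}_{Q_n}(1)^{\oplus r})$.

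I do not expect a real obstacle: the substantive input is already carried by Lemma~\ref{lem:wedge-to-tensor-section}, the identity \eqref{Eqn:0}, and Theorem~\ref{thm:DP}. The only places needing care are the bookkeeping that forces $\rank\mathcal{F}=ra'=pf'$ so that Druel--Paris applies directly to $\mathcal{F}$, the division by $a'$ transferring the degree bound from $\det\mathcal{F}$ to $\det\mathcal{E}$, the Krull--Schmidt step recovering $\mathcal{E}$ from $\mathcal{F}$ in the quadric case, and the isolated pair $(r,p)=(1,1)$, which sits outside the rank hypothesis of Theorem~\ref{thm:DP} and must be handled separately by Theorem~\ref{Thm:WahDru}.
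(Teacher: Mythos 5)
Your proposal is correct and follows essentially the same route as the paper: reduce via $\mathcal{F}=\mathcal{E}^{\oplus a'}$, the identity $pf'=ra'$, and the symmetric half of Lemma~\ref{lem:wedge-to-tensor-section} to the hypothesis of Theorem~\ref{thm:DP}, then read off the two alternatives (and indeed Snow's theorem plays no role here, exactly as in the paper's proof). The only differences are refinements the paper leaves implicit --- your separate treatment of $(r,p)=(1,1)$, where $\rank\mathcal{F}=1$ falls outside the rank hypothesis of Theorem~\ref{thm:DP} and Theorem~\ref{Thm:WahDru} is needed, and the explicit Krull--Schmidt step recovering $\mathcal{E}\cong\mathcal{O}_{Q_n}(1)^{\oplus r}$ from $\mathcal{E}^{\oplus a'}\cong\mathcal{O}_{Q_n}(1)^{\oplus ra'}$ --- both of which are correct and welcome.
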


\begin{proof}
Consider the ample vector bundle $\mathcal F := \mathcal E^{\oplus a'}$ on $X$. Then $\rank(\mathcal F)=ra'$ and $\det(\mathcal F)=\det(\mathcal E)^{\otimes a'}$. But we have $pf'=ra'=\rank(\mathcal{F})$ by \eqref{Eqn:0}, and Lemma~\ref{lem:wedge-to-tensor-section} gives us
\[
H^0\left(X,\ T_X^{\otimes pf'}\otimes \det(\mathcal F)^{-1}\right)\neq 0.
\]
Thus, Druel-Paris Theorem~\ref{thm:DP}, gives us the two possibilities:
\begin{enumerate}[leftmargin=*]
    \item  $(X, \det\mathcal{F})\cong (\mathbb{P}^n, \mathcal{O}_{\mathbb{P}^n}(l))$ with $\rank\mathcal{F}\leq l\leq \rank\mathcal{F}(n+1)/n $. But $\det\mathcal{F}=\det \mathcal{E}^{\oplus a'}=(\det\mathcal{E})^{\otimes a'}\cong \mathcal{O}_{\mathbb{P}^n}(l)$, with $ra'\leq l\leq ra'(n+1)/n$. Therefore, $\det\mathcal{E}\cong \mathcal{O}_{\mathbb{P}^n}(l)$ with $r\leq l\leq r(n+1)/n$ where $r=\rank\mathcal{E}$.
    \item $(X,\mathcal{F})\cong (Q_n, \mathcal{O}_{Q_n}(1)^{\oplus k})$ where $k=2i+nj$ with $i,j\gg 0$. But $k=\rank\mathcal{F}=ra'$, where $r=\rank\mathcal{E}$. Thus, $\mathcal{E}=\mathcal{O}_{Q_n}(1)^{\oplus r}$.
    \end{enumerate}
    \end{proof}

\subsection{Characterizing $\mathcal{E}$ when $X\cong\mathbb{P}^n$} In this section we characterize ample vector bundles $\mathcal{E}$ of rank $r$ on $\mathbb{P}^n$ satisfying either of the following two conditions:
\begin{itemize}[leftmargin=*]
    \item \textbf{Condition A}: $\bigwedge^p\mathcal{E}\subseteq \bigwedge^p T_{\mathbb{P}^n}$ and $\det\mathcal{E}\cong \mathcal{O}_{\mathbb{P}^n}(l)$ with $r\leq l\leq \frac{r(n+1)}{n}$ or,
    \item \textbf{Condition B}: $\Sym^p\mathcal{E}\subseteq\Sym^pT_{\mathbb{P}^n}$ and $\det\mathcal{E}\cong \mathcal{O}_{\mathbb{P}^n}(l)$ with $r\leq l\leq \frac{r(n+1)}{n}$.
\end{itemize}
Clearly, we have $0<r\leq n$ in either situations as $\rank(\bigwedge^p\mathcal{V})$ and $\rank(\Sym^p\mathcal{V})$ are both increasing functions of $\rank(\mathcal{V})$. In Condition A, we also have the constraint $0<p\leq n$, whereas in Condition B, $p$ is any positive integer. First note the following well-known splitting type of the tangent bundle $T_{\mathbb{P}^n}$ on any line $L\subset \mathbb{P}^n$ (see for example, \cite{OSS88}[Example, Pg~14]).

\begin{lemma}\label{lem:TL}
Let $L\cong \mathbb{P}^1$ be a line in $\mathbb{P}^n$. Then $T_{\mathbb{P}^n}|_L \ \cong\ \mathcal{O}_L(2)\ \oplus\ \mathcal{O}_L(1)^{\oplus (n-1)}$.
\end{lemma}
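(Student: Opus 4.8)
The plan is to restrict the Euler sequence to $L$ and combine Grothendieck's splitting theorem on $\mathbb{P}^1$ with an elementary degree count. Pulling back
\[
0 \to \mathcal{O}_{\mathbb{P}^n} \to \mathcal{O}_{\mathbb{P}^n}(1)^{\oplus(n+1)} \to T_{\mathbb{P}^n} \to 0
\]
along $L \hookrightarrow \mathbb{P}^n$ gives the exact sequence
\[
0 \to \mathcal{O}_L \to \mathcal{O}_L(1)^{\oplus(n+1)} \to T_{\mathbb{P}^n}|_L \to 0,
\]
where exactness on the left is retained because the first map is a subbundle inclusion, so its cokernel remains locally free after restriction.

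Next I would invoke Grothendieck's theorem to write $T_{\mathbb{P}^n}|_L \cong \bigoplus_{i=1}^{n} \mathcal{O}_L(a_i)$ and pin down the $a_i$ by two observations. Comparing degrees (first Chern classes) in the short exact sequence yields $\sum_i a_i = (n+1)\cdot 1 - 0 = n+1$. Moreover each $a_i \geq 1$: the summand $\mathcal{O}_L(a_i)$ is a quotient of $T_{\mathbb{P}^n}|_L$, which is itself a quotient of $\mathcal{O}_L(1)^{\oplus(n+1)}$, so the composite $\mathcal{O}_L(1)^{\oplus(n+1)} \twoheadrightarrow \mathcal{O}_L(a_i)$ is nonzero, forcing $H^0(L,\mathcal{O}_L(a_i-1)) \neq 0$ and hence $a_i \geq 1$. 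An $n$-tuple of integers each at least $1$ summing to $n+1$ must be a permutation of $(2,1,\dots,1)$, giving $T_{\mathbb{P}^n}|_L \cong \mathcal{O}_L(2) \oplus \mathcal{O}_L(1)^{\oplus(n-1)}$.

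As an alternative I could use the normal bundle sequence $0 \to T_L \to T_{\mathbb{P}^n}|_L \to N_{L/\mathbb{P}^n} \to 0$ together with $T_L \cong \mathcal{O}_L(2)$ and the standard identification $N_{L/\mathbb{P}^n} \cong \mathcal{O}_L(1)^{\oplus(n-1)}$; the sequence splits since $\operatorname{Ext}^1\!\bigl(\mathcal{O}_L(1)^{\oplus(n-1)}, \mathcal{O}_L(2)\bigr) = H^1(L,\mathcal{O}_L(1))^{\oplus(n-1)} = 0$. There is no genuine obstacle here — this is a classical fact, recorded only for later use when restricting $\bigwedge^p T_{\mathbb{P}^n}$ and $\Sym^p T_{\mathbb{P}^n}$ to lines — and the only points needing a word of justification are the exactness of the restricted Euler sequence (equivalently, local freeness of the quotient) and, in the first approach, the remark that a quotient of $\mathcal{O}_L(1)^{\oplus(n+1)}$ cannot have a negative or trivial twist.
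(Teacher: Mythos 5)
Your proof is correct. The paper does not prove this lemma at all: it records it as a classical fact with a citation to \cite{OSS88} (the example on p.~14 there), so there is no argument in the paper to compare against. Your Euler-sequence computation is exactly the standard proof behind that citation: restriction of the Euler sequence to $L$ stays exact (the quotient $T_{\mathbb{P}^n}$ is locally free, equivalently the first map is a subbundle inclusion), Grothendieck splitting plus $\sum_i a_i = n+1$ and the observation that a quotient of $\mathcal{O}_L(1)^{\oplus(n+1)}$ has no summand of degree $\leq 0$ forces the type $(2,1,\dots,1)$. Your alternative via $0\to T_L\to T_{\mathbb{P}^n}|_L\to N_{L/\mathbb{P}^n}\to 0$, with splitting from $H^1(L,\mathcal{O}_L(1))=0$, is equally standard and also complete; either route fully justifies the lemma as used later for the splitting types of $\bigwedge^p T_{\mathbb{P}^n}$ and $\Sym^p T_{\mathbb{P}^n}$ on lines.
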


Consequently, one obtains the following splitting type of exterior powers of $T_{\mathbb{P}^n}$.

\begin{lemma}\label{lem:wedgeTL}
For $1\leq p\leq n$ and a line $L\cong \mathbb{P}^1$ in $\mathbb{P}^n$, one has
\[
\left(\bigwedge^pT_{\mathbb{P}^n}\right)\Bigg|_{L}\cong \bigwedge^p(T_{\mathbb{P}^n}|_L)\ \cong\
\mathcal{O}_L(p+1)^{\oplus \binom{n-1}{p-1}}\ \oplus\ \mathcal{O}_L(p)^{\oplus \binom{n-1}{p}}.
\]
In particular, every line bundle in the splitting type of $\bigwedge^pT_{\mathbb{P}^n}$ has degree either $p$ or $p+1$, and the maximal degree occurring is $p+1$.
\end{lemma}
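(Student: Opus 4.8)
The plan is to reduce everything to the elementary decomposition of an exterior power of a direct sum of line bundles. First I would observe that restriction along the closed immersion $L\hookrightarrow\mathbb{P}^n$ is just pullback of locally free sheaves, and pullback commutes with all tensorial constructions; in particular $\bigl(\bigwedge^pT_{\mathbb{P}^n}\bigr)\big|_L\cong\bigwedge^p\bigl(T_{\mathbb{P}^n}|_L\bigr)$, which gives the first isomorphism in the statement for free. Then I would invoke Lemma~\ref{lem:TL} to replace $T_{\mathbb{P}^n}|_L$ by $\mathcal{O}_L(2)\oplus\mathcal{O}_L(1)^{\oplus(n-1)}$.

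Next, the key step is the canonical decomposition $\bigwedge^p(A\oplus B)\cong\bigoplus_{i+j=p}\bigwedge^iA\otimes\bigwedge^jB$, valid for locally free sheaves $A,B$. Applying this with $A=\mathcal{O}_L(2)$, which has rank one so that $\bigwedge^iA=0$ for $i\geq 2$, and $B=\mathcal{O}_L(1)^{\oplus(n-1)}$, only the terms $i=0$ and $i=1$ survive, yielding
\[
\bigwedge^p\bigl(T_{\mathbb{P}^n}|_L\bigr)\ \cong\ \bigwedge^pB\ \oplus\ \Bigl(\mathcal{O}_L(2)\otimes\bigwedge^{p-1}B\Bigr).
\]
To finish I would compute $\bigwedge^k\bigl(\mathcal{O}_L(1)^{\oplus(n-1)}\bigr)$ directly: expanding the exterior power of a direct sum of $n-1$ copies of $\mathcal{O}_L(1)$, each of the $\binom{n-1}{k}$ summands indexed by a $k$-element subset is a tensor product of $k$ copies of $\mathcal{O}_L(1)$, hence isomorphic to $\mathcal{O}_L(k)$; so $\bigwedge^kB\cong\mathcal{O}_L(k)^{\oplus\binom{n-1}{k}}$. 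Plugging in $k=p$ for the first term and $k=p-1$ for the second (and using $\mathcal{O}_L(2)\otimes\mathcal{O}_L(p-1)\cong\mathcal{O}_L(p+1)$) produces exactly $\mathcal{O}_L(p+1)^{\oplus\binom{n-1}{p-1}}\oplus\mathcal{O}_L(p)^{\oplus\binom{n-1}{p}}$.

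For the ``in particular'' clause, since $1\leq p\leq n$ we have $\binom{n-1}{p-1}\geq 1$, so the degree-$(p+1)$ summand genuinely occurs, while no line bundle of degree exceeding $p+1$ appears and the only degrees present are $p$ and $p+1$; one may also record the sanity check $\binom{n-1}{p-1}+\binom{n-1}{p}=\binom{n}{p}=\rank\bigwedge^pT_{\mathbb{P}^n}$. There is no genuine obstacle in this lemma — the only point requiring a little care is the bookkeeping of which exterior-power terms of the direct sum vanish, which is exactly where the rank-one-ness of $\mathcal{O}_L(2)$ is used.
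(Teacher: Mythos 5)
Your proof is correct and follows essentially the same route as the paper: restrict via Lemma~\ref{lem:TL} and expand the exterior power of the direct sum $\mathcal{O}_L(2)\oplus\mathcal{O}_L(1)^{\oplus(n-1)}$, with the rank-one factor contributing only the $i=0,1$ terms. Your explicit use of $\bigwedge^p(A\oplus B)\cong\bigoplus_{i+j=p}\bigwedge^iA\otimes\bigwedge^jB$ and the rank check are just slightly more detailed bookkeeping of the same computation.
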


\begin{proof}
By Lemma~\ref{lem:TL},
$T_{\mathbb{P}^n}|_L\cong \mathcal{O}_L(2)\oplus \mathcal{O}_L(1)^{\oplus(n-1)}$. Since direct sum and exterior powers commute, taking the $p^{th}$ wedge and expanding
gives two types of terms: $(i)$ those where we choose the $\mathcal{O}_L(2)$-summand (and hence
choose $p-1$ copies of $\mathcal{O}_L(1)$), which contribute $\mathcal{O}_L(2+(p-1))=\mathcal{O}_L(p+1)$ with multiplicity
$\binom{n-1}{p-1}$, and $(ii)$ those where we choose only $\mathcal{O}_L(1)$-summands contributing $\mathcal{O}_L(p)$
with multiplicity $\binom{n-1}{p}$.
\end{proof}

Analogously, we have the following for symmetric powers of $T_{\mathbb{P}^n}$.

\begin{lemma}\label{lem:symTL}
For $p>0$ and a line $L\cong \mathbb{P}^1$ in $\mathbb{P}^n$, one has
\[
\left(\Sym^pT_{\mathbb{P}^n}\right)|_{L}\cong \Sym^p(T_{\mathbb{P}^n}|_L)\ \cong\
\bigoplus_{\substack{m_1+m_2+\dots+m_n=p\\ m_i\geq0}}\mathcal{O}_L(2m_1+m_2+\dots+m_n).
\]
In particular, every line bundle in the splitting type of $\Sym^pT_{\mathbb{P}^n}$ has degree $p+m_1$ for some $0\leq m_1\leq p$, and the maximal degree occurring is $2p$, with multiplicity $1$.
\end{lemma}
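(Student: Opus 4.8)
The plan is to mirror the proof of Lemma~\ref{lem:wedgeTL} verbatim, replacing the exterior power by the symmetric power. First I would note that the formation of $\Sym^p$ of a locally free sheaf is compatible with arbitrary base change, so restriction to $L$ commutes with $\Sym^p$, giving $(\Sym^pT_{\mathbb{P}^n})|_L\cong\Sym^p(T_{\mathbb{P}^n}|_L)$. By Lemma~\ref{lem:TL} we have $T_{\mathbb{P}^n}|_L\cong\mathcal{O}_L(2)\oplus\mathcal{O}_L(1)^{\oplus(n-1)}$, so it remains to compute the symmetric power of this split bundle.

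Next I would invoke the standard decomposition of a symmetric power of a direct sum of line bundles: for line bundles $\mathcal{L}_1,\dots,\mathcal{L}_n$ on a scheme (over a field of characteristic zero, as here), one has
\[
\Sym^p\!\left(\bigoplus_{i=1}^n\mathcal{L}_i\right)\ \cong\ \bigoplus_{\substack{m_1+\dots+m_n=p\\ m_i\geq 0}}\mathcal{L}_1^{\otimes m_1}\otimes\cdots\otimes\mathcal{L}_n^{\otimes m_n},
\]
using that $\Sym^m$ of a line bundle is its $m$-th tensor power. Applying this with $\mathcal{L}_1=\mathcal{O}_L(2)$ and $\mathcal{L}_2=\cdots=\mathcal{L}_n=\mathcal{O}_L(1)$ gives each summand as $\mathcal{O}_L(2)^{\otimes m_1}\otimes\mathcal{O}_L(1)^{\otimes(m_2+\dots+m_n)}\cong\mathcal{O}_L(2m_1+m_2+\dots+m_n)$, which is exactly the asserted direct sum.

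For the ``in particular'' clause, I would simply rewrite the degree of each summand as $2m_1+m_2+\dots+m_n=m_1+(m_1+m_2+\dots+m_n)=p+m_1$, so every occurring degree is of the form $p+m_1$ with $0\leq m_1\leq p$; the maximum value $2p$ is attained precisely when $m_1=p$, which forces $m_2=\dots=m_n=0$, so the degree-$2p$ summand occurs with multiplicity $1$. There is no real obstacle here: the only point requiring a word of care is the characteristic-zero hypothesis needed for the direct-sum decomposition of $\Sym^p$, which is automatic since we work over $\mathbb{C}$, and the fact that the multiplicity bookkeeping is forced by tying the index $m_1$ to the unique degree-$2$ summand of $T_{\mathbb{P}^n}|_L$.
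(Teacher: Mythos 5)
Your proof is correct and follows essentially the same route as the paper: restrict via Lemma~\ref{lem:TL}, use that symmetric powers commute with direct sums of line bundles, and read off the degrees $p+m_1$ with the unique maximal summand $\mathcal{O}_L(2p)$ at $m_1=p$. (One tiny remark: the decomposition $\Sym^p(\bigoplus_i\mathcal{L}_i)\cong\bigoplus_{\sum m_i=p}\bigotimes_i\mathcal{L}_i^{\otimes m_i}$ actually holds in any characteristic, so the characteristic-zero caveat you flag is not needed here, though it is of course harmless over $\mathbb{C}$.)
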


\begin{proof}
By Lemma~\ref{lem:TL},
$T_{\mathbb{P}^n}|_L\cong \mathcal{O}_L(2)\oplus \mathcal{O}_L(1)^{\oplus(n-1)}$. The splitting type of $\Sym^pT_{\mathbb{P}^n}$ follows from noting that direct sum and symmetric powers commute. Clearly the maximal degree line bundle in the splitting can only occur when $m_1=p$ and $m_i=0$ for all $i\geq 2$, which gives degree $2p$ with multiplicity $1$. 
\end{proof}

We now find the splitting type of ample vector bundle $\mathcal{E}$ on $\mathbb{P}^n$ satisfying Condition~A.

\begin{proposition}\label{prop:splittype}
Let $L\cong \mathbb{P}^1$ be any line. Let $\mathcal{E}$ be an ample vector bundle of rank $r$ on $\mathbb{P}^n$ satisfying Condition A above. Then $l:=\deg\det\mathcal{E}=r$ or $r+1$ and:
\begin{enumerate}[leftmargin=*]
\item if $l=r$ then $\mathcal{E}|_L\cong \mathcal{O}_L(1)^{\oplus r}$ for every line $L$;
\item if $l=r+1$ then $\mathcal{E}|_L\cong \mathcal{O}_L(2)\oplus \mathcal{O}_L(1)^{\oplus (r-1)}$ for every line $L$.
\end{enumerate}
In either case $\mathcal{E}$ is a \emph{uniform} bundle (i.e., the splitting type on lines is independent of $L$).
\end{proposition}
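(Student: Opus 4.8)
The plan is to reduce the entire statement to the splitting type of $\mathcal{E}$ on a single arbitrary line and then argue purely on $\mathbb{P}^1$. Fix a line $L\cong\mathbb{P}^1\subseteq\mathbb{P}^n$, and recall that the standing hypothesis of this section is $1\leq p\leq r$. Since $\mathcal{E}$ is ample, so is its restriction $\mathcal{E}|_L$, and by Grothendieck's theorem we may write $\mathcal{E}|_L\cong\bigoplus_{i=1}^r\mathcal{O}_L(a_i)$ with $a_1\geq a_2\geq\dots\geq a_r$; ampleness forces $a_i\geq 1$ for every $i$, because each $\mathcal{O}_L(a_i)$ is a quotient of the ample bundle $\mathcal{E}|_L$ and hence has positive degree. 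Note that $a_1+\dots+a_r=\deg(\mathcal{E}|_L)=\deg(\det\mathcal{E}|_L)=l$.

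Next I would restrict the inclusion of Condition~A to $L$. Since $\bigwedge^p\mathcal{E}\subseteq\bigwedge^pT_{\mathbb{P}^n}$ is an inclusion of subbundles, its cokernel is locally free, hence flat, so tensoring with $\mathcal{O}_L$ preserves injectivity and yields an inclusion of vector bundles $\bigwedge^p(\mathcal{E}|_L)\hookrightarrow\bigwedge^p(T_{\mathbb{P}^n}|_L)$ on $\mathbb{P}^1$. Since exterior powers commute with direct sums, $\bigwedge^p(\mathcal{E}|_L)\cong\bigoplus_{i_1<\dots<i_p}\mathcal{O}_L(a_{i_1}+\dots+a_{i_p})$, whose summand of largest degree is $\mathcal{O}_L(a_1+\dots+a_p)$ because the $a_i$ are nonincreasing. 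The key elementary input is that for an inclusion of bundles $\mathcal{A}\hookrightarrow\mathcal{B}$ on $\mathbb{P}^1$, the maximal degree among the summands of $\mathcal{A}$ is at most that among the summands of $\mathcal{B}$: restricting the inclusion to a top-degree summand $\mathcal{O}_L(c)$ of $\mathcal{A}$ gives a nonzero map, hence a nonzero map $\mathcal{O}_L(c)\to\mathcal{O}_L(d)$ into some summand of $\mathcal{B}$, which forces $c\leq d$. Combining this with Lemma~\ref{lem:wedgeTL}, which says the maximal summand degree of $\bigwedge^p(T_{\mathbb{P}^n}|_L)$ is $p+1$, yields the single inequality
\[
a_1+a_2+\dots+a_p\ \leq\ p+1.
\]

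It remains to run the arithmetic. From $a_2,\dots,a_p\geq 1$ and the inequality above, $a_1\leq(p+1)-(p-1)=2$, so $a_1\in\{1,2\}$. If $a_1=1$, then $1=a_1\geq a_2\geq\dots\geq a_r\geq1$ forces every $a_i=1$, so $\mathcal{E}|_L\cong\mathcal{O}_L(1)^{\oplus r}$ and $l=r$. If $a_1=2$, then $a_1+\dots+a_p\geq 2+(p-1)=p+1$, so the inequality is an equality and $a_2=\dots=a_p=1$; since $a_p=1\geq a_{p+1}\geq\dots\geq a_r\geq1$, the remaining $a_i$ are all $1$ too, so $\mathcal{E}|_L\cong\mathcal{O}_L(2)\oplus\mathcal{O}_L(1)^{\oplus(r-1)}$ and $l=r+1$. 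In either case the splitting type on $L$ is completely determined by the global invariant $l=\deg\det\mathcal{E}$, hence is independent of $L$, so $\mathcal{E}$ is uniform. (When $p=r$ the bundle $\bigwedge^p(\mathcal{E}|_L)$ is the single line bundle $\mathcal{O}_L(l)$ and the same inequality reads $l\leq r+1$; this is already covered by the argument above. Note also that the numerical bound $l\leq r(n+1)/n$ built into Condition~A is not needed here.)

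There is no serious obstacle: the proof uses only ampleness of $\mathcal{E}$, the inclusion $\bigwedge^p\mathcal{E}\subseteq\bigwedge^pT_{\mathbb{P}^n}$, and Lemma~\ref{lem:wedgeTL}. The only two points requiring a little care are that a subbundle inclusion restricts to an injection on $L$ (local freeness of the cokernel) and the correct identification of the top summand degrees on both sides; both are routine.
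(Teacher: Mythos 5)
Your argument coincides with the paper's for $p\geq 2$, but it has a genuine gap at $p=1$, which is an allowed (and important) case of Condition A. Your final arithmetic step reads: if $a_1=2$ then equality in $a_1+\dots+a_p\leq p+1$ forces $a_2=\dots=a_p=1$, and then ``since $a_p=1\geq a_{p+1}\geq\dots\geq a_r$'' the rest are $1$. When $p=1$ the inequality is just $a_1\leq 2$ and $a_p=a_1=2$, so this chain gives nothing about $a_2,\dots,a_r$: your argument does not exclude, say, $\mathcal{E}|_L\cong\mathcal{O}_L(2)^{\oplus 2}\oplus\mathcal{O}_L(1)^{\oplus(r-2)}$, which would give $l=r+2$ and contradict the statement you are proving. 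The exclusion of a second degree-$2$ summand genuinely needs a separate argument when $p=1$; the single inequality $a_1+\dots+a_p\leq p+1$ only rules it out when $p\geq 2$.

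The paper closes exactly this case with an extra observation: for $p=1$ the restricted injection is $\mathcal{E}|_L\hookrightarrow T_{\mathbb{P}^n}|_L=\mathcal{O}_L(2)\oplus\mathcal{O}_L(1)^{\oplus(n-1)}$, and since $\Hom(\mathcal{O}_L(2),\mathcal{O}_L(1))=0$, any map out of a degree-$2$ summand factors through the unique $\mathcal{O}_L(2)$ summand of the target; hence $\mathcal{O}_L(2)^{\oplus 2}$ cannot inject, and at most one $a_i$ equals $2$. You should add this (or an equivalent) step. Alternatively, the numerical bound you explicitly discard would also rescue you: Condition A includes $l\leq r(n+1)/n$, and since $r\leq n$ (as noted just before the proposition) this gives $l\leq r+1$ directly, whence $a_i\geq 1$ and $\sum_i a_i=l\leq r+1$ already force the two splitting types; but the paper deliberately avoids using that bound (see the remark following Proposition~\ref{prop:splittypesymmetric}, which records these propositions as independent derivations of $l=r$ or $r+1$), so if you keep your claim that the bound is unnecessary, the $p=1$ case must be handled by the $\Hom$-vanishing argument above. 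The remaining steps of your proof (ampleness giving $a_i\geq 1$, restriction of the subbundle inclusion to $L$, comparison of maximal summand degrees via Lemma~\ref{lem:wedgeTL}, and the uniformity conclusion) match the paper's proof and are correct.
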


\begin{proof}
Let $\mathcal{E}|_L \cong \bigoplus_{i=1}^r \mathcal{O}_L(a_i)$ with $a_1\geq a_2\geq \cdots \geq a_r$. Since $\mathcal{E}$ is ample, so is the restriction $\mathcal{E}|_L$ to any line $L\subset\mathbb{P}^n$. By \cite{Hartshorne66}[\S 7, Example 1] and \cite{Hartshorne66}[Proposition~7.1], we have $a_r\geq 1$. Restrict the given injection $\iota:\bigwedge^p\mathcal{E}\hookrightarrow\bigwedge^pT_{\mathbb{P}^n}$ to a line $L\subset \mathbb{P}^n$ to obtain
\[
\iota|_L:\bigwedge^p(\mathcal{E}|_L)\ \hookrightarrow\ \bigwedge^p(T_{\mathbb{P}^n}|_L).
\]
By Lemma~\ref{lem:wedgeTL}, the target is a direct sum of copies of $\mathcal{O}_L(p+1)$ and $\mathcal{O}_L(p)$.
On the other hand, $\bigwedge^p(\mathcal{E}|_L)$ splits as a direct sum of line bundles $\mathcal{O}_L(a_{i_1}+a_{i_2}+\dots+a_{i_p})$ for subsets $\{i_1,\dots, i_p\}\subseteq\{1,2,\dots, r\}$, and
therefore, $\mathcal{O}_L(a_1+\cdots+a_p)$ is a \emph{maximal} degree summand since $a_1\geq a_2\geq\dots\geq a_r$. Thus, we have an injection $\mathcal{O}_L(a_1+\cdots+a_p)\rightarrow \bigwedge^p(T_{\mathbb{P}^n}|_L)$, and hence by Lemma~\ref{lem:wedgeTL} we have $a_1+\dots+a_p\leq p+1$, since a non-zero map $\mathcal{O}_{\mathbb{P}^1}(d)\rightarrow\bigoplus_{i=1}\mathcal{O}_{\mathbb{P}^1}(d_i)$ exists if and only if $d\leq \max_id_i$. This also implies $a_1\leq 2$ since $a_i\geq a_r\geq 1$ for all $1\leq i\leq r$. Furthermore, only one of the $a_i$'s can equal $2$. This is because if $p\geq 2$, and at least two of the $a_i$'s are $2$, then $a_1+\dots+a_p\geq2+2+(p-2)>p+1$. For $p=1$, we have $\mathcal{E}|_L\hookrightarrow T_{\mathbb{P}^n}|_L=\mathcal{O}_L(2)\oplus \mathcal{O}_L(1)^{\oplus(n-1)}$, but
$\Hom(\mathcal{O}_L(2),\mathcal{O}_L(1))=0$, and so at most one $\mathcal{O}_L(2)$ can inject into $T_{\mathbb{P}^n}|_L$. Hence at most one $a_i$ can equal $2$. Thus $a_1\in\{1,2\}$ and all other $a_i=1$ for $2\leq i\leq r$, proving the stated two possible splitting types on a line. Finally, $\det(\mathcal{E})|_L \cong \det(\mathcal{E}|_L)\cong \mathcal{O}_L(\sum_{i=1}^{r} a_i)$ has degree
\[
\sum_{i=1}^r a_i = \deg(\det\mathcal{E}|_L)=\deg(\mathcal{O}_{\mathbb{P}^n}(l)|_L)=l,
\] 
Thus, $l=\sum_{i=1}^{r}a_i$ equals either $r$ or $r+1$, depending on whether $(a_1,\dots, a_r)=(1,1,\dots, 1)$ or $(2,1,1\dots, 1)$ respectively. This in particular shows that the splitting type is constant on all lines, i.e., $\mathcal{E}$ is uniform.
\end{proof}

A very similar analysis yields the splitting type of an ample vector bundle $\mathcal{E}$ on $\mathbb{P}^n$ satisfying Condition B.

\begin{proposition}\label{prop:splittypesymmetric}
Let $L\cong \mathbb{P}^1$ be any line. Let $\mathcal{E}$ be an ample vector bundle of rank $r$ on $\mathbb{P}^n$ satisfying Condition B above. Then $l:=\deg\det\mathcal{E}=r$ or $r+1$ and:
\begin{enumerate}[leftmargin=*]
\item if $l=r$ then $\mathcal{E}|_L\cong \mathcal{O}_L(1)^{\oplus r}$ for every line $L$;
\item if $l=r+1$ then $\mathcal{E}|_L\cong \mathcal{O}_L(2)\oplus \mathcal{O}_L(1)^{\oplus (r-1)}$ for every line $L$.
\end{enumerate}
In either case $\mathcal{E}$ is a \emph{uniform} bundle (i.e., the splitting type on lines is independent of $L$).
\end{proposition}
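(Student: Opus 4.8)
The plan is to mirror the line-restriction argument in the proof of Proposition~\ref{prop:splittype}, substituting Lemma~\ref{lem:symTL} for Lemma~\ref{lem:wedgeTL}. First I would fix an arbitrary line $L\cong\mathbb{P}^1\subset\mathbb{P}^n$ and write $\mathcal{E}|_L\cong\bigoplus_{i=1}^r\mathcal{O}_L(a_i)$ with $a_1\geq a_2\geq\cdots\geq a_r$; since $\mathcal{E}$ is ample, so is $\mathcal{E}|_L$, and hence $a_r\geq1$ by \cite{Hartshorne66}[\S 7, Example 1] and \cite{Hartshorne66}[Proposition~7.1]. Restricting the inclusion $\iota':\Sym^p\mathcal{E}\hookrightarrow\Sym^pT_{\mathbb{P}^n}$ to $L$, and using that $\Sym^p$ commutes with restriction and with direct sums, gives an injection
\[
\iota'|_L:\ \bigoplus_{\substack{m_1+\dots+m_r=p\\ m_i\geq0}}\mathcal{O}_L(m_1a_1+\dots+m_ra_r)\ \hookrightarrow\ \Sym^p(T_{\mathbb{P}^n}|_L),
\]
whose target, by Lemma~\ref{lem:symTL}, is a direct sum of line bundles of degree at most $2p$, with degree $2p$ occurring with multiplicity exactly $1$.

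Next I would extract two constraints on the $a_i$. The summand with $m_1=p$ and all other $m_i=0$ is $\mathcal{O}_L(pa_1)$, a maximal-degree summand of the source; since a nonzero map $\mathcal{O}_L(d)\to\bigoplus_j\mathcal{O}_L(d_j)$ exists only if $d\leq\max_j d_j$, injectivity forces $pa_1\leq2p$, i.e. $a_1\leq2$, and combined with $a_i\geq a_r\geq1$ this gives $a_i\in\{1,2\}$ for all $i$. To rule out two indices with $a_i=2$, suppose $a_1=a_2=2$. Then $\mathcal{O}_L(2)^{\oplus2}$ is a direct summand of $\mathcal{E}|_L$, so $\Sym^p(\mathcal{O}_L(2)^{\oplus2})\cong\mathcal{O}_L(2p)^{\oplus(p+1)}$ is a direct summand of $\Sym^p(\mathcal{E}|_L)$ and therefore injects into $\Sym^p(T_{\mathbb{P}^n}|_L)$; since every summand of the target other than the single copy of $\mathcal{O}_L(2p)$ has strictly smaller degree, this injection factors through that one copy, forcing an injection $\mathcal{O}_L(2p)^{\oplus(p+1)}\hookrightarrow\mathcal{O}_L(2p)$, which is impossible as $p+1\geq2$. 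Hence $(a_1,\dots,a_r)$ equals $(1,1,\dots,1)$ or $(2,1,\dots,1)$.

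To conclude, $\det(\mathcal{E})|_L\cong\mathcal{O}_L(a_1+\dots+a_r)$ has degree $l=\deg\det\mathcal{E}$, so $l=r$ in the first case and $l=r+1$ in the second. Since $l$ is independent of $L$ while the previous paragraph shows that $l$ determines the splitting type of $\mathcal{E}|_L$, the bundle $\mathcal{E}$ is uniform with precisely the splitting type asserted in (1) and (2).

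The step I expect to need the most care is ruling out two summands of degree $2$. In the exterior setting of Proposition~\ref{prop:splittype}, the ``diagonal'' summand $\mathcal{O}_L(a_1+\dots+a_p)$ already has degree exceeding the maximal degree $p+1$ in $\bigwedge^pT_{\mathbb{P}^n}|_L$ as soon as two $a_i$ equal $2$; for symmetric powers, however, the maximal degree $2p$ is attained whether one or two of the $a_i$ equal $2$, so one must instead compare \emph{multiplicities} of the top-degree summand on the two sides, relying on the elementary fact that an injective map of vector bundles on $\mathbb{P}^1$ cannot increase the multiplicity of a summand of maximal degree.
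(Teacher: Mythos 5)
Your proof is correct and takes essentially the same route as the paper's: restrict the inclusion to a line, use ampleness to get $a_i\geq 1$, bound $a_1\leq 2$ from the maximal degree $2p$ in $\Sym^p(T_{\mathbb{P}^n}|_L)$, rule out two summands of degree $2$ by comparing multiplicities of $\mathcal{O}_L(2p)$, and read off $l$ from $\det(\mathcal{E})|_L$. The only (harmless) difference is that you justify the multiplicity comparison explicitly via the factoring argument through the unique $\mathcal{O}_L(2p)$ summand, a step the paper states without elaboration.
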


\begin{proof}
As in the above proof, $\mathcal{E}|_L \cong \bigoplus_{i=1}^r \mathcal{O}_L(a_i)$ with $a_1\geq a_2\geq \cdots \geq a_r\geq 1$. Restrict the given injection $\iota':\Sym^p\mathcal{E}\hookrightarrow\Sym^pT_{\mathbb{P}^n}$ to a line $L\subset \mathbb{P}^n$ to obtain
\[
\iota'|_L:\Sym^p(\mathcal{E}|_L)\ \hookrightarrow\ \Sym^p(T_{\mathbb{P}^n}|_L).
\]
By Lemma~\ref{lem:symTL}, the target is a direct sum of copies of $\mathcal{O}_L(m_1+p)$, where $0\leq m_i\leq p$. On the other hand, $\Sym^p(\mathcal{E}|_L)$ splits as a direct sum of line bundles $\mathcal{O}_L(a_1m_1+a_2m_2+\dots+a_rm_r)$, where $m_i\geq 0$ and $m_1+\dots+m_r=p$. If $a_1>2$, then one of the summands in $\Sym^p(\mathcal{E}|_L)$ would have degree $pa_1>2p$, which is the maximal degree among line bundles occurring in $\Sym^p(T_{\mathbb{P}^n}|_L)$. This would violate the inclusion $\iota'|_L:\Sym^p(\mathcal{E}|_L)\ \hookrightarrow\ \Sym^p(T_{\mathbb{P}^n}|_L)$ and thus, $a_1\leq 2$. Similarly if $a_1=a_2=2$, then the multiplicity of $\mathcal{O}_L(2p)$ in $\Sym^p(\mathcal{E}|_L)$ would be the number of solutions $m_1+m_2=p$, with $m_i\geq 0$, which is greater than $1$. Since $\mathcal{O}_L(2p)$ occurs with multiplicity $1$ in $\Sym^p(T_{\mathbb{P}^n}|_L)$, we see that $a_2=2$ violates the inclusion $\iota'|_L:\Sym^p(\mathcal{E}|_L)\ \hookrightarrow\ \Sym^p(T_{\mathbb{P}^n}|_L)$. Thus $a_1\in\{1,2\}$ and all other $a_i=1$ for $2\leq i\leq r$, proving the stated two possible splitting types on a line. Finally, as in the proof of Proposition~\ref{prop:splittype}, $\det(\mathcal{E})\cong \mathcal{O}_{\mathbb{P}^n}(\sum_{i=1}^{r} a_i)\cong \mathcal{O}_{\mathbb{P}^n}(l)$ with $l$ equal to $r$ or $r+1$, depending on whether $(a_1,\dots, a_r)=(1,1,\dots, 1)$ or $(2,1,1\dots, 1)$ respectively. This also shows that $\mathcal{E}$ is uniform.
\end{proof}

\begin{remark}
    Propositions~\ref{prop:splittype} and ~\ref{prop:splittypesymmetric} give independent proofs of $l=r$ or $r+1$, which we also obtained in Theorems~\ref{Theorem:Main2} and ~\ref{Theorem:Main2symmetric} from Druel-Paris's bounds in Theorem~\ref{thm:DP}.
\end{remark}

We now note a theorem of Elencwajg--Hirschowitz--Schneider on uniform vector bundles on $\mathbb{P}^n$, which will be useful.

\begin{theorem}[\cite{EHS80}]\label{thm:EHS}
Let $\mathcal{F}$ be a uniform vector bundle of rank $r\leq n$ on $\mathbb{P}^n_{\mathbb{C}}$.
Then either $\mathcal{F}$ is a direct sum of line bundles, or $r=n$ and $\mathcal{F}$ is (up to tensoring by a line bundle)
the tangent bundle $T_{\mathbb{P}^n}$ or the cotangent bundle $\Omega^1_{\mathbb{P}^n}$.
\end{theorem}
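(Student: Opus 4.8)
The plan is to follow the classical line of Elencwajg--Hirschowitz--Schneider: pass to the flag variety of points and lines, build the relative Harder--Narasimhan filtration of the pulled-back bundle, show that it descends to $\mathbb{P}^n$, and then induct on the rank $r$, feeding in the rank $\le 2$ case (Van de Ven) and the rank $=n$ case (Sato) at the extremes. After tensoring by a line bundle we may assume the splitting type is $(a_1\ge a_2\ge\cdots\ge a_r)$ with $a_r=0$. If moreover $a_1=0$, then $\mathcal F|_L$ is trivial for every line $L$, and the evaluation map $H^0(\mathbb P^n,\mathcal F)\otimes\mathcal O_{\mathbb P^n}\to\mathcal F$ is an isomorphism on each line (as $h^0(L,\mathcal F|_L)=r$ and $h^0(L,\mathcal F|_L(-1))=0$), hence an isomorphism, so $\mathcal F$ is trivial. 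We may thus assume $a_1\ge 1$ and, by induction on $r$, that the statement holds in all ranks $<r$ (the cases $r\le 2$ being classical).

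Next I would set $\mathbb G=\mathbb G(1,n)$ and $F=\{(x,L):x\in L\}\subset\mathbb P^n\times\mathbb G$ with its projections $p\colon F\to\mathbb P^n$ and $q\colon F\to\mathbb G$; here $p$ is a $\mathbb P^{n-1}$-bundle (the fibre over $x$ being the space of lines through $x$) and $q$ is a $\mathbb P^1$-bundle whose fibre over $[L]$ is $L$ itself, and $p^*\mathcal O_{\mathbb P^n}(1)$ restricts to $\mathcal O_L(1)$ on each fibre of $q$. Since $\mathcal F$ is uniform, $p^*\mathcal F$ has constant splitting type along the fibres of $q$, so cohomology-and-base-change applied to the twists $p^*\mathcal F\otimes p^*\mathcal O_{\mathbb P^n}(-b)$ produces the relative Harder--Narasimhan filtration as a filtration by subbundles $0=\mathcal F_0\subset\mathcal F_1\subset\cdots\subset\mathcal F_s=p^*\mathcal F$ with $(\mathcal F_j/\mathcal F_{j-1})|_{q^{-1}([L])}\cong\mathcal O_L(b_j)^{\oplus m_j}$, where $b_1>\cdots>b_s=0$ are the distinct values among the $a_i$ with multiplicities $m_j$. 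Because the relative hyperplane class of $q$ agrees with $p^*\mathcal O_{\mathbb P^n}(1)$ up to a twist pulled back from $\mathbb G$, each graded piece takes the form $\mathcal F_j/\mathcal F_{j-1}\cong q^*\mathcal W_j\otimes p^*\mathcal O_{\mathbb P^n}(b_j)$ for a vector bundle $\mathcal W_j$ of rank $m_j\le r-1$ on $\mathbb G$.

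Restricting the filtration to a fibre $p^{-1}(x)\cong\mathbb P^{n-1}$, on which both $p^*\mathcal F$ and every $p^*\mathcal O_{\mathbb P^n}(b_j)$ are trivial, exhibits the restrictions $\mathcal W_j|_{\ell_x}$ — where $\ell_x\subset\mathbb G$ is the linearly embedded $\mathbb P^{n-1}$ of lines through $x$ — as the successive quotients of a filtration of the trivial bundle $\mathcal O_{\mathbb P^{n-1}}^{\oplus r}$. The crux is to deduce from this that each $\mathcal W_j$ is trivial on every $\ell_x$, and hence, by a separation-of-points argument on $F$, that $\mathcal F_j/\mathcal F_{j-1}\cong p^*\mathcal O_{\mathbb P^n}(b_j)^{\oplus m_j}$; equivalently, that the relative Harder--Narasimhan filtration is the pullback of a filtration of $\mathcal F$ itself by subbundles $\mathcal E_j$ with $\mathcal E_j|_L\cong\mathcal O_L(b_j)^{\oplus m_j}$. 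Such $\mathcal E_j$ are uniform of rank $m_j<r$ and of one-step splitting type, so by the induction hypothesis (and the trivial-on-lines case above, after a twist) each $\mathcal E_j$ is a direct sum of copies of $\mathcal O_{\mathbb P^n}(b_j)$.

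It remains to reconstruct $\mathcal F$ from its graded pieces. If $s=1$ then $\mathcal F$ is a twist of a trivial bundle and we are done; if $s\ge 2$, iterating reduces us to an extension $0\to\mathcal A\to\mathcal F\to\mathcal B\to 0$ with $\mathcal A,\mathcal B$ direct sums of line bundles, every summand of $\mathcal A$ of strictly larger degree than every summand of $\mathcal B$. Computing the relevant $\mathrm{Ext}^1$ groups and imposing uniformity of $\mathcal F$, one finds that for $r\le n$ the only non-split outcome is, after a twist, the Euler sequence $0\to\mathcal O_{\mathbb P^n}\to\mathcal O_{\mathbb P^n}(1)^{\oplus (n+1)}\to T_{\mathbb P^n}(-1)\to 0$ or its dual, forcing $r=n$ and $\mathcal F\cong T_{\mathbb P^n}$ or $\Omega^1_{\mathbb P^n}$ up to twist. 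The main obstacle is precisely the descent step: proving that the graded bundles $\mathcal W_j$ on the Grassmannian are trivial. This is where $r\le n$ is indispensable — for larger rank genuinely new uniform bundles appear — and in practice one either invokes Van de Ven's rank-$2$ theorem and Sato's rank-$n$ classification as black boxes at the extremal ranks and runs the induction, or carries out a direct cohomological analysis of the $\mathcal W_j$ via the tautological sequences on $\mathbb G(1,n)$ restricted to the linear subspaces $\ell_x$.
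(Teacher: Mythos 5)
Note first that the paper does not prove this statement at all: Theorem~\ref{thm:EHS} is imported as a black box from \cite{EHS80}, so your proposal can only be measured against the classical argument there. Your overall scaffolding (incidence variety $F\subset\mathbb{P}^n\times\mathbb{G}(1,n)$, relative Harder--Narasimhan filtration of $p^*\mathcal F$ along the $\mathbb P^1$-bundle $q$, graded pieces of the form $q^*\mathcal W_j\otimes p^*\mathcal O(b_j)$, induction on the rank) is indeed the right framework, but the logical structure has a genuine gap at exactly the point you call the ``crux''. The claim that each $\mathcal W_j$ is trivial on every $\ell_x$, so that the filtration descends to a filtration of $\mathcal F$ by subbundles with graded pieces $\mathcal O(b_j)^{\oplus m_j}$, is simply false in general, and the theorem's own exceptional cases are the counterexamples: for $\mathcal F=T_{\mathbb P^n}$ the first graded piece is the relative tangent sheaf of $q$, whose fibre at $(x,L)$ is $T_{L,x}\subset T_{X,x}$, so $\mathcal W_1|_{\ell_x}$ is the tautological sub-line-bundle $\mathcal O_{\mathbb P^{n-1}}(-1)$ on the $\mathbb P^{n-1}$ of lines through $x$ --- not trivial --- and correspondingly $T_{\mathbb P^n}$ admits no subbundle filtration with graded pieces $\mathcal O(2)$ and $\mathcal O(1)^{\oplus(n-1)}$.

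This error then propagates to your reconstruction step and makes the argument prove too much. If descent held, you would be looking at iterated extensions $0\to\mathcal A\to\mathcal F\to\mathcal B\to 0$ of direct sums of line bundles on $\mathbb P^n$; but $\mathrm{Ext}^1(\mathcal B,\mathcal A)=\bigoplus H^1(\mathbb P^n,\mathcal O(d))=0$ for $n\ge 2$, so \emph{every} such extension splits, and your outline would conclude that every uniform bundle of rank $\le n$ is a direct sum of line bundles --- contradicting the statement being proved. In particular no ``non-split outcome'' can appear at that stage, and the Euler sequence could not arise there anyway, since it exhibits $T_{\mathbb P^n}$ (suitably twisted) as a \emph{quotient} of a sum of line bundles, not as an extension of one sum of line bundles by another. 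In the actual EHS proof the dichotomy sits one step earlier: either the relative filtration descends (and one inducts to get splitting), or it fails to descend, and the hard work --- the part your sketch defers to ``a direct cohomological analysis of the $\mathcal W_j$'' or to Sato's rank-$n$ theorem --- is to show that this failure, under the constraint $r\le n$, forces $r=n$ and $\mathcal F\cong T_{\mathbb P^n}(a)$ or $\Omega^1_{\mathbb P^n}(b)$. As written, the proposal places the exceptional cases where they cannot occur and asserts descent where it fails, so the key step of the theorem is still missing.
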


We can now prove the main classification result.

\begin{theorem}\label{Theorem:Main3}
Let $\mathcal{E}$ be an ample vector bundle of rank $r$ on $\mathbb{P}^n$ satisfying either Condition (A) or (B). Then $r\leq n$ and:
\begin{enumerate}[leftmargin=*]
\item If $r<n$, then $\mathcal{E}\cong \mathcal{O}_{\mathbb{P}^n}(1)^{\oplus r}$.
\item If $r=n$ and $l=n$, then $\mathcal{E}\cong \mathcal{O}_{\mathbb{P}^n}(1)^{\oplus n}$.
\item If $r=n$, $l=n+1$ and $\mathcal{E}$ satisfies Condition A:
\begin{enumerate}
\item If $1\leq p\leq n-1$, then $\mathcal{E}\cong T_{\mathbb{P}^n}$.
\item If $p=n$, then either
$\mathcal{E}\cong T_{\mathbb{P}^n}$ or $\mathcal{E}\cong\mathcal{O}_{\mathbb{P}^n}(2)\oplus \mathcal{O}_{\mathbb{P}^n}(1)^{\oplus(n-1)}$.
\end{enumerate}
\item If $r=n$, $l=n+1$ and $\mathcal{E}$ satisfies Condition B, then $\mathcal{E}\cong T_{\mathbb{P}^n}$.
\end{enumerate}
\end{theorem}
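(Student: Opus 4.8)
The strategy is to feed the uniform splitting types from Propositions~\ref{prop:splittype} and \ref{prop:splittypesymmetric} into the Elencwajg--Hirschowitz--Schneider classification (Theorem~\ref{thm:EHS}), and then eliminate the impossible cases by examining how $\bigwedge^p$ or $\Sym^p$ of each candidate bundle sits inside $\bigwedge^pT_{\mathbb{P}^n}$ or $\Sym^pT_{\mathbb{P}^n}$. By Proposition~\ref{prop:splittype} (resp.\ \ref{prop:splittypesymmetric}), $\mathcal{E}$ is uniform with splitting type $(1,\dots,1)$ when $l=r$ and $(2,1,\dots,1)$ when $l=r+1$; in particular $r\le n$. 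Theorem~\ref{thm:EHS} then says $\mathcal{E}$ is either a direct sum of line bundles, or $r=n$ and $\mathcal{E}$ is $T_{\mathbb{P}^n}$ or $\Omega^1_{\mathbb{P}^n}$ up to a twist.

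\textbf{Step 1: the split case.} If $\mathcal{E}\cong\bigoplus_i\mathcal{O}_{\mathbb{P}^n}(b_i)$, then uniformity forces the multiset $\{b_i\}$ to be exactly the splitting type read off above: either all $b_i=1$, giving $\mathcal{E}\cong\mathcal{O}_{\mathbb{P}^n}(1)^{\oplus r}$, or one $b_i=2$ and the rest $=1$, giving $\mathcal{E}\cong\mathcal{O}_{\mathbb{P}^n}(2)\oplus\mathcal{O}_{\mathbb{P}^n}(1)^{\oplus(r-1)}$. In the first subcase we are done for all of (1), (2), (3)(a)(b), (4). So it remains to decide, when the splitting type is $(2,1,\dots,1)$, whether this split bundle actually admits the required inclusion. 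Here the point is a degree count on $\mathbb{P}^n$ itself (not just on a line): $\bigwedge^p\mathcal{E}$ contains the summand $\mathcal{O}_{\mathbb{P}^n}(p+1)$, while by the Euler sequence and Bott vanishing one computes $H^0(\mathbb{P}^n,\bigwedge^pT_{\mathbb{P}^n}\otimes\mathcal{O}_{\mathbb{P}^n}(-p-1))=H^0(\mathbb{P}^n,\Omega_{\mathbb{P}^n}^{n-p}\otimes\mathcal{O}_{\mathbb{P}^n}(n-p))$ after using $\bigwedge^pT_{\mathbb{P}^n}\cong\Omega^{n-p}_{\mathbb{P}^n}(n+1)$; Bott's formula makes this group vanish unless $n-p=0$. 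Thus the split bundle $\mathcal{O}_{\mathbb{P}^n}(2)\oplus\mathcal{O}_{\mathbb{P}^n}(1)^{\oplus(n-1)}$ survives \emph{only} when $p=n$ (and $r=n$), which is precisely case (3)(b); when $r<n$ it is excluded, and for Condition B one runs the same computation with $\Sym^p$, where the summand $\mathcal{O}_{\mathbb{P}^n}(2p)$ of $\Sym^p\mathcal{E}$ must map into $\Sym^pT_{\mathbb{P}^n}$, forcing $H^0(\mathbb{P}^n,\Sym^pT_{\mathbb{P}^n}\otimes\mathcal{O}_{\mathbb{P}^n}(-2p))\ne 0$, which fails for every $p\ge 1$ — eliminating the split candidate entirely under Condition B.

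\textbf{Step 2: the twisted (co)tangent case.} Now suppose $r=n$ and $\mathcal{E}\cong T_{\mathbb{P}^n}\otimes\mathcal{O}_{\mathbb{P}^n}(t)$ or $\Omega^1_{\mathbb{P}^n}\otimes\mathcal{O}_{\mathbb{P}^n}(t)$. The splitting type on a line pins down $t$: for $T_{\mathbb{P}^n}(t)$ the restriction is $\mathcal{O}_L(2+t)\oplus\mathcal{O}_L(1+t)^{\oplus(n-1)}$, which must be $(2,1,\dots,1)$, so $t=0$ and $\mathcal{E}\cong T_{\mathbb{P}^n}$ (this also matches $l=n+1$, consistent with case (3)/(4) having $l=n+1$); for $\Omega^1_{\mathbb{P}^n}(t)$ the restriction is $\mathcal{O}_L(t-2)\oplus\mathcal{O}_L(t-1)^{\oplus(n-1)}$, and matching $(2,1,\dots,1)$ is impossible (the rank-$(n-1)$ part would sit above the single summand). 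So among the non-split candidates only $\mathcal{E}\cong T_{\mathbb{P}^n}$ arises, and it genuinely satisfies both conditions (indeed $\bigwedge^pT_{\mathbb{P}^n}=\bigwedge^pT_{\mathbb{P}^n}$ and $\Sym^pT_{\mathbb{P}^n}=\Sym^pT_{\mathbb{P}^n}$ trivially, and $T_{\mathbb{P}^n}$ is ample). Combining with Step 1: if $r<n$ then only the split type is available and only $\mathcal{O}_{\mathbb{P}^n}(1)^{\oplus r}$ survives, giving (1); if $r=n$ and $l=n$ the splitting type is $(1,\dots,1)$, which is incompatible with $T_{\mathbb{P}^n}$ or $\Omega^1_{\mathbb{P}^n}$, leaving only $\mathcal{O}_{\mathbb{P}^n}(1)^{\oplus n}$, giving (2); if $r=n$, $l=n+1$ and Condition A holds, then the candidates are $T_{\mathbb{P}^n}$ (always admissible) and the split bundle (admissible iff $p=n$), giving (3)(a) and (3)(b); and if $r=n$, $l=n+1$ and Condition B holds, the split bundle is excluded by Step 1 for all $p$, leaving only $T_{\mathbb{P}^n}$, giving (4).

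\textbf{Main obstacle.} The delicate point is Step 1's exclusion of the split bundle $\mathcal{O}_{\mathbb{P}^n}(2)\oplus\mathcal{O}_{\mathbb{P}^n}(1)^{\oplus(n-1)}$ when $p<n$ (Condition A) and for all $p$ (Condition B): a restriction-to-a-line argument is not enough, since on each line this bundle does inject into $(\bigwedge^pT_{\mathbb{P}^n})|_L$. One really needs a global cohomology vanishing on $\mathbb{P}^n$ — via the identification $\bigwedge^pT_{\mathbb{P}^n}\cong\Omega^{n-p}_{\mathbb{P}^n}(n+1)$ and Bott's formula for $H^\bullet(\mathbb{P}^n,\Omega^q_{\mathbb{P}^n}(k))$ — to see that no nonzero map $\mathcal{O}_{\mathbb{P}^n}(p+1)\to\bigwedge^pT_{\mathbb{P}^n}$ exists unless $p=n$, and likewise for the symmetric case that no nonzero map $\mathcal{O}_{\mathbb{P}^n}(2p)\to\Sym^pT_{\mathbb{P}^n}$ exists for $p\ge1$. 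Everything else is bookkeeping of splitting types against the EHS list.
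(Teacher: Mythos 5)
Your overall strategy is the same as the paper's: read off the uniform splitting type from Propositions~\ref{prop:splittype} and~\ref{prop:splittypesymmetric}, invoke Theorem~\ref{thm:EHS}, and then eliminate the split candidate $\mathcal{O}_{\mathbb{P}^n}(2)\oplus\mathcal{O}_{\mathbb{P}^n}(1)^{\oplus(r-1)}$ by a global cohomology computation, using $\bigwedge^pT_{\mathbb{P}^n}\cong\Omega^{n-p}_{\mathbb{P}^n}(n+1)$ and Bott's formula in the exterior case. That part is fine. The genuine gap is in the symmetric case: you correctly isolate the needed statement, namely $H^0\bigl(\mathbb{P}^n,\ \Sym^pT_{\mathbb{P}^n}\otimes\mathcal{O}_{\mathbb{P}^n}(-2p)\bigr)=0$ for all $p\geq 1$ and $n\geq 2$, and you even flag it as the main obstacle, but you simply assert that it ``fails for every $p\geq1$'' without proof or reference. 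This is not a Bott-formula statement (note it is false for $n=1$, where $\Sym^pT_{\mathbb{P}^1}(-2p)\cong\mathcal{O}_{\mathbb{P}^1}$), and a restriction-to-lines argument only shows the relevant summand has degree $0$ on each line, which does not give vanishing. The paper closes exactly this point by citing \cite{Shao24}[Theorem~5.4]; without that (or an equivalent argument) your Case~(4) and the exclusion of the split bundle under Condition~B are unproven.

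A second, minor slip: your claim that no twist $\Omega^1_{\mathbb{P}^n}(t)$ can have splitting type $(2,1,\dots,1)$ is false for $n=2$, since $\Omega^1_{\mathbb{P}^2}(3)|_L\cong\mathcal{O}_L(1)\oplus\mathcal{O}_L(2)$. Your final conclusion is unaffected because $\Omega^1_{\mathbb{P}^2}(3)\cong T_{\mathbb{P}^2}$ (via $\Omega^1_{\mathbb{P}^2}\cong T_{\mathbb{P}^2}\otimes\omega_{\mathbb{P}^2}$), which is how the paper disposes of this case, but as written your case analysis overlooks it. Also note that for $r<n$ the hypothesis $r\leq l\leq r(n+1)/n$ already forces $l=r$ (as the paper observes), so the split type $(2,1,\dots,1)$ never arises there and no cohomological exclusion is needed; your route through the vanishing works for Condition~A but, for Condition~B with $r<n$, again leans on the unproven symmetric vanishing, whereas the degree bound settles it outright.
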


\begin{proof}
Since $\rank T_{\mathbb{P}^n}=n$, we know $r\leq n$. We will assume $n\geq 2$ as for $n=1$, it is immediate that $\mathcal{E}\cong\mathcal{O}_{\mathbb{P}^1}(1)$ or $T_{\mathbb{P}^1}\cong\mathcal{O}_{\mathbb{P}^1}(2)$. Now we have the following cases.

\smallskip\noindent\emph{Case 1: $r<n$.}
If $r<n$, then $r+1>r(n+1)/n$, and so $r\leq l\leq r(n+1)/n$ implies $l=r$. By Proposition \ref{prop:splittype} and \ref{prop:splittypesymmetric}, the bundle
$\mathcal{E}$ is uniform with splitting type $(1,\dots,1)$ irrespective of whether $\mathcal{E}$ satisfies Condition A or B. Applying Theorem \ref{thm:EHS} (with $r<n$) shows that $\mathcal{E}$ splits as a sum of line bundles, say
$\mathcal{E}\cong \bigoplus_{i=1}^r \mathcal{O}_{\mathbb{P}^n}(a_i)$.
Restricting to any line $L$ and using Proposition \ref{prop:splittype}, each $a_i$ must equal $1$,
hence $\mathcal{E}\cong \mathcal{O}(1)^{\oplus r}$.

\smallskip\noindent\emph{Case 2: $r=n$ and $l=n$.}
Then Proposition \ref{prop:splittype} and \ref{prop:splittypesymmetric} give $\mathcal{E}|_L\cong \mathcal{O}_L(1)^{\oplus n}$ on every line $L$, irrespective of whether $\mathcal{E}$ satisfies Condition A or B. By Theorem \ref{thm:EHS}, $\mathcal{E}$ is either a sum of line bundles or a twist of $T_{\mathbb{P}^n}$ or $\Omega^1_{\mathbb{P}^n}$. But the latter two possibilities are excluded by Lemma~\ref{lem:TL}: $T_{\mathbb{P}^n}|_L \ \cong\ \mathcal{O}_L(2)\ \oplus\ \mathcal{O}_L(1)^{\oplus (n-1)}$ and $\Omega^1_{\mathbb{P}^n}|_L \ \cong\ \mathcal{O}_L(-2)\ \oplus\ \mathcal{O}_L(-1)^{\oplus (n-1)}$. Thus $\mathcal{E}\cong \oplus_i \mathcal{O}(a_i)$, and as in Case 1, we have $\mathcal{E}\cong \mathcal{O}(1)^{\oplus n}$.

\smallskip\noindent\emph{Case 3: $r=n$, $l=n+1$ and $\mathcal{E}$ satisfies Condition A.}
By Proposition \ref{prop:splittype}, $\mathcal{E}$ is uniform with splitting type $(2,1,\dots,1)$ on every line.
By Theorem \ref{thm:EHS} it is either a direct sum of line bundles or a twist of $T_{\mathbb{P}^n}$ or $\Omega^1_{\mathbb{P}^n}$.

\smallskip
\noindent \emph{(a) Assume $1\leq p\leq n-1$.}
If $\mathcal{E}$ were split, the only possibility compatible with the line splitting type is
$\mathcal{E}\cong \mathcal{O}_{\mathbb{P}^n}(2)\oplus \mathcal{O}_{\mathbb{P}^n}(1)^{\oplus(n-1)}$.
Then for $1\leq p\leq n-1$,
\[
\bigwedge^p \mathcal{E}
\ \cong\
\mathcal{O}_{\mathbb{P}^n}(p+1)^{\oplus \binom{n-1}{p-1}}\ \oplus\ \mathcal{O}_{\mathbb{P}^n}(p)^{\oplus \binom{n-1}{p}},
\]
which implies $\bigwedge^p\mathcal{E}$ contains $\mathcal{O}_{\mathbb{P}^n}(p+1)$ as a direct summand and thus, the inclusion $\bigwedge^p\mathcal{E}\subseteq \bigwedge^pT_{\mathbb{P}^n}$ induces a non-zero map $\mathcal{O}_{\mathbb{P}^n}(p+1)\rightarrow\bigwedge^p T_{\mathbb{P}^n}$. Using $\bigwedge^pT_{\mathbb{P}^n} \cong \Omega^{n-p}_{\mathbb{P}^n}\otimes \det(T_{\mathbb{P}^n}) \cong \Omega^{n-p}_{\mathbb{P}^n}(n+1)$, we therefore obtain
\begin{equation}\label{Eqn:7}
\Hom\bigl(\mathcal{O}_{\mathbb{P}^n}(p+1),\ \bigwedge^p T_{\mathbb{P}^n}\bigr)
\cong
H^0\bigl(\mathbb{P}^n,\ \bigwedge^p T_{\mathbb{P}^n}\,(-p-1)\bigr)\cong H^0\bigl(\mathbb{P}^n, \ \Omega^{n-p}_{\mathbb{P}^n}(n-p)\bigr) \neq 0.
\end{equation}
By the well-known Bott formula (\cite{Bott57}), $H^0(\mathbb{P}^n, \Omega^{q}_{\mathbb{P}^n}(k))=0$ when $k\leq q$ and $q>0$, and therefore for $k=q=n-p\geq 1$ implies $H^0(\mathbb{P}^n, \ \Omega_{\mathbb{P}^n}^{n-p}(n-p))=0$. This contradicts \eqref{Eqn:7} and hence, $\mathcal{E}$ cannot be a direct sum of line bundles when $p\leq n-1$. This leaves the possibility of $\mathcal{E}\cong T_{\mathbb{P}^n}(a)$ or $\Omega_{\mathbb{P}^n}(b)$ for some $a,b\in\mathbb{Z}$. But we know $\mathcal{E}|_L\cong \mathcal{O}_L(2)\oplus \mathcal{O}_L(1)^{\oplus(n-1)}$ from Proposition~\ref{prop:splittype}. Comparing this with $T_{\mathbb{P}^n}(a)|_L\cong \mathcal{O}_L(2+a)\ \oplus\ \mathcal{O}_L(1+a)^{\oplus (n-1)}$ and $\Omega_{\mathbb{P}^n}(b)|_L\cong \mathcal{O}_L(-2+b)\ \oplus\ \mathcal{O}_L(-1+b)^{\oplus (n-1)}$ (by Lemma~\ref{lem:TL}), we see that the only possibilities are $\mathcal{E}\cong T_{\mathbb{P}^n}$ or $\mathcal{E}\cong \Omega_{\mathbb{P}^n}(3)$, when $n=2$. But by perfect pairing for vector bundles, $\Omega^1_{\mathbb{P}^2}\cong(\Omega^1_{\mathbb{P}^2})^\vee\otimes\det\Omega^1_{\mathbb{P}^2}\cong T_{\mathbb{P}^2}\otimes\mathcal{O}_{\mathbb{P}^2}(-3)$ implying $T_{\mathbb{P}^2}\cong \Omega^1_{\mathbb{P}^2}(3)$. Thus, we have $\mathcal{E}\cong T_{\mathbb{P}^n}$ always for $n\geq 2$.

\smallskip
\noindent\emph{(b) Assume $p=n$.}
Here $\bigwedge^n\mathcal{E}=\det\mathcal{E}\cong \mathcal{O}_{\mathbb{P}^n}(n+1)=\det(T_{\mathbb{P}^n})=\bigwedge^nT_{\mathbb{P}^n}$, and the hypothesis simply says that there exists an injection $\mathcal{O}_{\mathbb{P}^n}(n+1)\hookrightarrow \mathcal{O}_{\mathbb{P}^n}(n+1)$, which is necessarily an isomorphism. Thus $p=n$ imposes no extra constraint beyond $l=n+1$. In this case all the possibilities given by Theorem \ref{thm:EHS} can occur. Like case \emph{(a)} above, Proposition~\ref{prop:splittype} tells us that the splitting type of $\mathcal{E}$ is $(2,1,\dots, 1)$. Therefore,
\begin{itemize}[leftmargin=*]
    \item If $\mathcal{E}$ is split, then $\mathcal{E}\cong \mathcal{O}_{\mathbb{P}^n}(2)\oplus \mathcal{O}_{\mathbb{P}^n}(1)^{\oplus(n-1)}$.
    \item If $\mathcal{E}\cong T_{\mathbb{P}^n}(a)$ for $a\in\mathbb{Z}$, then by restricting to any line $L$ and comparing splitting types, the only possibility is $a=0$.
    \item If $\mathcal{E}\cong \Omega_{\mathbb{P}^n}(b)$, then similarly by splitting types, the only possibility is $b=3$, when $n=2$. But then, as we saw above, $\Omega_{\mathbb{P}^2}(3)\cong T_{\mathbb{P}^2}$.
\end{itemize}

\smallskip\noindent\emph{Case 4: $r=n$, $l=n+1$ and $\mathcal{E}$ satisfies Condition B.}
By Proposition \ref{prop:splittypesymmetric}, $\mathcal{E}$ is uniform with splitting type $(2,1,\dots,1)$ on every line.
By Theorem \ref{thm:EHS} it is either a direct sum of line bundles or a twist of $T_{\mathbb{P}^n}$ or $\Omega^1_{\mathbb{P}^n}$. If $\mathcal{E}$ were split, the only possibility compatible with the line splitting type is
$\mathcal{E}\cong \mathcal{O}_{\mathbb{P}^n}(2)\oplus \mathcal{O}_{\mathbb{P}^n}(1)^{\oplus(n-1)}$. Then for $p>0$, $\Sym^p\mathcal{E}$ has the summand $\mathcal{O}_{\mathbb{P}^n}(2p)$, which along with the inclusion $\Sym^p\mathcal{E}\subseteq \Sym^pT_{\mathbb{P}^n}$ yields
\begin{equation}\label{Eqn:8}
\Hom\bigl(\mathcal{O}_{\mathbb{P}^n}(2p),\ \Sym^p T_{\mathbb{P}^n}\bigr)
\cong
H^0\bigl(\mathbb{P}^n,\ \Sym^p T_{\mathbb{P}^n}\otimes\mathcal{O}_{\mathbb{P}^n}(-2p)\bigr) \neq 0.
\end{equation}
By \cite{Shao24}[Theorem~5.4], \eqref{Eqn:8} is not possible for any $p>0$ when $n\geq 2$. Hence $\mathcal{E}$ cannot be a direct sum of line bundles. This leaves the possibility of $\mathcal{E}\cong T_{\mathbb{P}^n}(a)$ or $\Omega_{\mathbb{P}^n}(b)$ for some $a,b\in\mathbb{Z}$. But as in Case 3, comparing splitting type yields the only possibility to be $\mathcal{E}\cong T_{\mathbb{P}^n}$.
\end{proof}

Thus, Theorem~\ref{Theorem:Main} follows from Theorems~\ref{Theorem:Main2} and \ref{Theorem:Main3} and Theorem~\ref{Theorem:Main4} follows from Theorems~\ref{Theorem:Main2symmetric} and \ref{Theorem:Main3}.

\begin{bibdiv}
\begin{biblist}

\bib{AW01}{article}{
  author={Andreatta, Mauro},
  author={Wi\'sniewski, Jaros{\l}aw A.},
  title={On manifolds whose tangent bundle contains an ample subbundle},
  journal={Invent. Math.},
  volume={146},
  date={2001},
  number={1},
  pages={209--217},
  doi={10.1007/PL00005808},
  review={\MR{1859022}},
}

\bib{Ara06}{article}{
  author={Araujo, Carolina},
  title={Rational curves of minimal degree and characterizations of projective spaces},
  journal={Math. Ann.},
  volume={335},
  date={2006},
  number={4},
  pages={937--951},
  doi={10.1007/s00208-006-0775-2},
  review={\MR{2232023}},
}

\bib{ADK08}{article}{
  author={Araujo, Carolina},
  author={Druel, St\'ephane},
  author={Kov\'acs, S\'andor J.},
  title={Cohomological characterizations of projective spaces and hyperquadrics},
  journal={Invent. Math.},
  volume={174},
  date={2008},
  number={2},
  pages={233--253},
  doi={10.1007/s00222-008-0130-1},
  review={\MR{2439607}},
  eprint={arXiv:0707.4310},
}

\bib{Bea00}{article}{
  author={Beauville, Arnaud},
  title={Symplectic singularities},
  journal={Invent. Math.},
  volume={139},
  date={2000},
  number={3},
  pages={541--549},
  doi={10.1007/s002229900043},
  review={\MR{1738060}},
}

\bib{Bott57}{article}{
   author={Bott, Raoul},
   title={Homogeneous vector bundles},
   journal={Ann. of Math. (2)},
   volume={66},
   date={1957},
   pages={203--248},
   issn={0003-486X},
   review={\MR{0089473}},
   doi={10.2307/1969996},
}

\bib{CP98}{article}{
  author={Campana, Fr\'ed\'eric},
  author={Peternell, Thomas},
  title={Rational curves and ampleness properties of the tangent bundle of algebraic varieties},
  journal={Manuscripta Math.},
  volume={97},
  date={1998},
  number={1},
  pages={59--74},
  doi={10.1007/s002290050085},
  review={\MR{1642626}},
}

\bib{ChoSato95}{article}{
  author={Cho, Koji},
  author={Sato, Eiichi},
  title={Smooth projective varieties with the ample vector bundle {$\bigwedge^2 T_X$} in any characteristic},
  journal={J. Math. Kyoto Univ.},
  volume={35},
  date={1995},
  number={1},
  pages={1--33},
  doi={10.1215/kjm/1250518837},
  review={\MR{1317270}},
}

\bib{Dru04}{article}{
  author={Druel, St\'ephane},
  title={Caract\'erisation de l'espace projectif},
  journal={Manuscripta Math.},
  volume={115},
  date={2004},
  number={1},
  pages={19--30},
  doi={10.1007/s00229-004-0479-4},
  review={\MR{2092774}},
}

\bib{DP}{article}{
  author={Druel, St\'ephane},
  author={Paris, Matthieu},
  title={Characterizations of projective spaces and hyperquadrics},
  journal={Asian J. Math.},
  volume={17},
  date={2013},
  number={4},
  pages={583--596},
  doi={10.4310/AJM.2013.v17.n4.a1},
  review={\MR{3152253}},
  eprint={arXiv:1012.5238},
}

\bib{EHS80}{article}{
   author={Elencwajg, G.},
   author={Hirschowitz, A.},
   author={Schneider, M.},
   title={Les fibres uniformes de rang au plus $n$\ sur ${\bf P}\sb{n}({\bf
   C})$\ sont ceux qu'on croit},
   language={French},
   conference={
      title={Vector bundles and differential equations},
      address={Proc. Conf., Nice},
      date={1979},
   },
   book={
      series={Progr. Math.},
      volume={7},
      publisher={Birkh\"auser, Boston, MA},
   },
   isbn={3-7643-3022-8},
   date={1980},
   pages={37--63},
   review={\MR{0589220}},
}

\bib{Fra61}{article}{
  author={Frankel, Theodore},
  title={Manifolds with positive curvature},
  journal={Pacific J. Math.},
  volume={11},
  date={1961},
  pages={165--174},
  doi={10.2140/pjm.1961.11.165},
  review={\MR{0123272}},
}

\bib{Hartshorne66}{article}{
   author={Hartshorne, Robin},
   title={Ample vector bundles},
   journal={Inst. Hautes \'Etudes Sci. Publ. Math.},
   number={29},
   date={1966},
   pages={63--94},
   issn={0073-8301},
   review={\MR{0193092}},
}

\bib{KO73}{article}{
  author={Kobayashi, Shoshichi},
  author={Ochiai, Takushiro},
  title={Characterizations of complex projective spaces and hyperquadrics},
  journal={J. Math. Kyoto Univ.},
  volume={13},
  date={1973},
  pages={31--47},
  review={\MR{0316745}},
}

\bib{LOY19}{article}{
  author={Li, Duo},
  author={Ou, Wenhao},
  author={Yang, Xiaokui},
  title={On projective varieties with strictly nef tangent bundles},
  journal={J. Math. Pures Appl. (9)},
  volume={128},
  date={2019},
  pages={140--151},
  doi={10.1016/j.matpur.2019.04.007},
  review={\MR{3980847}},
  eprint={arXiv:1801.09191},
}

\bib{Liu23}{article}{
  author={Liu, Yuting},
  title={Positivity of exterior products of tangent bundles and their subsheaves},
  date={2023},
  status={preprint},
  eprint={arXiv:2310.19315},
}

\bib{Mor79}{article}{
  author={Mori, Shigefumi},
  title={Projective manifolds with ample tangent bundles},
  journal={Ann. of Math. (2)},
  volume={110},
  date={1979},
  number={3},
  pages={593--606},
  doi={10.2307/1971241},
  review={\MR{0554387}},
}

\bib{OSS88}{book}{
   author={Okonek, Christian},
   author={Schneider, Michael},
   author={Spindler, Heinz},
   title={Vector bundles on complex projective spaces},
   series={Modern Birkh\"auser Classics},
   note={Corrected reprint of the 1988 edition;
   With an appendix by S. I. Gelfand},
   publisher={Birkh\"auser/Springer Basel AG, Basel},
   date={2011},
   pages={viii+239},
   isbn={978-3-0348-0150-8},
   review={\MR{2815674}},
}

\bib{Price47}{article}{
   author={Price, G. B.},
   title={Some identities in the theory of determinants},
   journal={Amer. Math. Monthly},
   volume={54},
   date={1947},
   pages={75--90},
   issn={0002-9890},
   review={\MR{0019078}},
   doi={10.2307/2304856},
}

\bib{Ross11}{article}{
  author={Ross, Kiana},
  title={Characterizations of projective spaces and hyperquadrics via positivity properties of the tangent bundle},
  date={2010},
  status={preprint},
  eprint={arXiv:1012.2043},
}

\bib{Shao24}{article}{
   author={Shao, Feng},
   title={Pseudoeffective thresholds and cohomology of twisted symmetric
   tensor fields on irreducible Hermitian symmetric spaces},
   journal={Sci. China Math.},
   volume={67},
   date={2024},
   number={11},
   pages={2433--2452},
   issn={1674-7283},
   review={\MR{4803496}},
   doi={10.1007/s11425-022-2186-6},
}

\bib{SY80}{article}{
  author={Siu, Yum-Tong},
  author={Yau, Shing-Tung},
  title={Compact K\"ahler manifolds of positive bisectional curvature},
  journal={Invent. Math.},
  volume={59},
  date={1980},
  number={2},
  pages={189--204},
  doi={10.1007/BF01390043},
  review={\MR{0577360}},
}

\bib{Snow86}{article}{
   author={Snow, Dennis M.},
   title={Cohomology of twisted holomorphic forms on Grassmann manifolds and
   quadric hypersurfaces},
   journal={Math. Ann.},
   volume={276},
   date={1986},
   number={1},
   pages={159--176},
   issn={0025-5831},
   review={\MR{0863714}},
   doi={10.1007/BF01450932},
}

\bib{Wah83}{article}{
  author={Wahl, Jonathan M.},
  title={A cohomological characterization of {$\mathbf{P}^n$}},
  journal={Invent. Math.},
  volume={72},
  date={1983},
  number={2},
  pages={315--322},
  doi={10.1007/BF01389326},
  review={\MR{0700774}},
}

\end{biblist}
\end{bibdiv}

\end{document}